\newtheorem{theorem}{Теорема}
\newtheorem{lemma}{Лемма}
\newtheorem{corolarry}{Следствие}
\begin{document}

\begin{center}
{\bf А.\,К. Уринов, Ш.\,Т.~Каримов}\\
{\bf РЕШЕНИЕ АНАЛОГА ЗАДАЧИ КОШИ ДЛЯ ИТЕРИРОВАННОГО МНОГОМЕРНОГО УРАВНЕНИЯ КЛЕЙНА-ГОРДОНА-ФОКА С ОПЕРАТОРОМ БЕССЕЛЯ}
\end{center}



\begin{abstract}
Исследуется аналог задачи Коши для итерированного многомерного уравнения Клейна-Гордона-Фока с оператором Бесселя, действующим по времени. Применяя обобщенный оператор Эрдейи-Кобера дробного порядка, поставленная задача сведена к задаче Коши для поливолнового уравнения. Построена явная формула решения этой задачи методом сферического среднего. На основе полученного решения найдено интегральное представление решения поставленной задачи.

{\bf Ключевые слова:} Задача Коши, уравнения Клейна-Гордона-Фока, обобщенный оператор Эрдейи-Кобера,  оператор Бесселя.
\end{abstract}

\section{\large Введение. Постановка задачи}

Предметом исследования является решение аналога задачи Коши для итерированного многомерного уравнения Клейна-Гордона-Фока с оператором Бесселя

\begin{equation}\label{eq1}
L_{\gamma ,\lambda }^m (u) \equiv \left( {B_\gamma ^t  - \Delta  + \lambda ^2 } \right)^m u = 0,\,\,(x,t) \in \Omega,
\end{equation}
удовлетворяющего начальным условиям
\begin{equation}\label{ic2}
\left. {\dfrac{{\partial ^{2k} u}}{{\partial t^{2k} }}} \right|_{t = 0}  = \varphi _k (x),\,\,\left. {\dfrac{{\partial ^{2k + 1} u}}{{\partial t^{2k + 1} }}} \right|_{t = 0}  = 0,\,\,x \in R^n, \,\,k = \overline {0,m - 1},
\end{equation}
\begin{equation}\label{ic3}
\left. {\dfrac{{\partial ^{2k} u}}{{\partial t^{2k} }}} \right|_{t = 0}  = 0,\,\,\left. {t^{2\gamma  + 1} \dfrac{{\partial ^{2k + 1} u}}{{\partial t^{2k + 1} }}} \right|_{t = 0}  = \psi _k (x),\,\,x \in R^n ,\,\,k = \overline {0,m - 1},
\end{equation}
где $u = u(x,t),$ $\Omega=\{(x,t): x \in R^n, t \in R, t>0 \},$ $m \in N,$ $L_{\gamma ,\lambda }^m  = L_{\gamma ,\lambda }^{} \left( {L_{\gamma ,\lambda }^{m - 1} } \right),$  $B_\gamma ^t  \equiv \partial ^2 /\partial t^2  + [(2\gamma  + 1)/t](\partial /\partial t),$  -оператор Бесселя, $\Delta   \equiv \sum\limits_{k = 0}^n ({\partial ^2}/{\partial x_k^2 }) $  - оператор Лапласа,  $\gamma ,\lambda  \in R,$  $\gamma  >  -1/2,$ $\varphi _k (x)$  и  $\psi _k (x),$  $k = \overline {0,m - 1} $  - заданные дифференцируемые функции.

Исследование итерированных уравнений высокого порядка представляет собой естественный дальнейший этап на пути теоретических обобщений. Ценность получаемых при этом теоретических результатов существенно возрастает в связи с тем, что подобные уравнения или их частные случаи встречаются в приложениях. Уравнение~\eqref{eq1} при $m = 1$  и $\gamma  =  - 1/2,$ переходит в уравнение Клейна-Гордона-Фока, которое является релятивистской версией уравнения Шрёдингера и представляет собой дифференциальное уравнение в частных производных, относящееся к классу гиперболических уравнений второго порядка. Оно описывает динамику релятивистской квантовой системы~\cite{BLOT}. Кроме того, оно является обобщением волнового уравнения, подходящего для описания без массовых скалярных и векторных полей.

В случае $\lambda  = 0,$ в работах \cite{Iva1}, \cite{Ald1} получено представление решения поставленной задачи. При этом решение конструировано, используя решение задачи Коши для уравнения Эйлера-Пуассона-Дарбу (ЭПД). Последнее основано на том факте, что среднее сферическое достаточно гладкой функции удовлетворяет уравнению Дарбу~\cite{Wei1}.   Отправляясь от этого факта \cite{Kur1}, получают формулу Кирхгофа для решения уравнения ЭПД, в том числе, для волнового уравнения.
Однако, когда $\lambda  \ne 0,$ этот факт не имеет места, поэтому этот подход неприменим.

В данной работе, в отличие от цитируемых источников, для решения поставленной задачи применим другой подход. А именно, учитывая специфику уравнений с сингулярными коэффициентами, используем  обобщенный оператор интегрирования дробного порядка Эрдейи-Кобера. Данный подход для решения поставленной задачи при $m = 1$  применен в работе \cite{UK}. Применение обобщенного оператора интегрирования дробного порядка Эрдейи-Кобера позволяет сводить уравнения с младшим членом $\lambda ^2 u$  и с сингулярным оператором Бесселя, который действует по одной или нескольким переменным,  к не сингулярным уравнениям без младшего члена $\lambda ^2 u.$ Поэтому, сначала  рассмотрим некоторые свойства данного оператора.

\section{\large Обобщенный оператор Эрдейи - Кобера}\label{s2}

В теории и приложениях широко используются различные модификации и обобщения классических операторов интегрирования и дифференцирования дробного порядка Римана - Лиувилля. К таким модификациям относятся, в частности, операторы Эрдейи - Кобера \cite{Erd1}, \cite{EK}. Оказалось, что эти операторы очень полезны в приложениях к интегральным и дифференциальным уравнениям, а также в других вопросах науки и техники \cite{SKM}. Их различные модификации, обобщения и приложения могут быть найдены в работах Эрдейи~\cite{Erd2}, \cite{Erd3}, Снеддона~\cite{Snedd1}, \cite{Snedd2} и Кирьяковой~\cite{Kir}.

В работе Лоундеса~\cite{Low1} был введен и исследован обобщенный оператор Эрдейи - Кобера с функцией Бесселя в ядре
\begin{equation}\label{ooek4}
J_\lambda  (\eta ,\,\alpha )f(x) = 2^\alpha  \lambda ^{1 - \alpha } x^{ - 2\alpha  - 2\eta } \int\limits_0^x {t^{2\eta  + 1} \frac{{J_{\alpha  - 1} \left( {\lambda \sqrt {x^2  - t^2 } } \right)}}{{(x^2  - t^2 )^{(1 - \alpha )/2} }}f(t)dt},
\end{equation}
где $\alpha ,\eta ,\,\lambda  \in R,\,\,\alpha  > 0,\,\,\eta  \ge  - (1/2),$ $J_\nu  (z)$ - функция Бесселя первого рода порядка $\nu.$ Оператор~\eqref{ooek4} при $\lambda=0$  совпадает с обычным оператором  Эрдейи - Кобера \cite{SKM}
\begin{equation}\label{oek5}
I_{\eta ,\alpha } f(x) = \frac{{2x^{ - 2(\eta  + \alpha )} }}{{\Gamma (\alpha )}}\int\limits_0^x {(x^2  - t^2 )^{\alpha  - 1} t^{2\eta  + 1} f(t)dt},
\end{equation}
где $\Gamma (\alpha )$ - гамма-функция  Эйлера.

Основные свойства этих операторов можно найти в книге~\cite[c. 245, 535]{SKM}.

Свойства оператора~\eqref{ooek4} в весовых пространствах $L_p (0,\infty )$
было изучено в работах \cite{Hey1} и \cite{Hey2}. В этих работах обобщенные операторы Эрдейи-Кобера названы оператором Лоундеса.

В дальнейшем нам понадобится следующий вид оператора \eqref{ooek4}:
\begin{equation}\label{op2.2}
\,\,\,J_\lambda  (\eta ,\,\alpha )f(x) = \frac{{2x^{ - 2(\alpha  + \eta )} }}{{\Gamma (\alpha )}}\int\limits_0^x {t^{2\eta  + 1} (x^2  - t^2 )^{\alpha  - 1} \bar J_{\alpha  - 1} \left( {\lambda \sqrt {x^2  - t^2 } } \right)f(t))dt},
\end{equation}
где $\bar J_\nu  (z)$ - функция Бесселя - Клиффорда, которая выражается через функции Бесселя $J_\nu  (z)$ по формуле \cite{SKM}:
\begin{equation}\label{fbk2.3}
\bar J_\nu  (z) = \Gamma (\nu  + 1)(z/2)^{ - \nu } J_\nu  (z) = {}_0F_1 (\nu  + 1; - z^2 /4) = \sum\limits_{k = 0}^\infty  {\frac{{( - z^2 /4)^k }}{{(\nu  + 1)_k k!}}}.
\end{equation}

В работах \cite{Kar1}, \cite{Kar2} доказаны приводимые ниже теоремы.

Пусть $[B_\eta ^x ]^0  = E,$  $E-$ единичный оператор, $[B_\eta ^x ]^m  = [B_\eta ^x ]^{m - 1} [B_\eta ^x ]$ - $m-$ая степень оператора Бесселя.  В дальнейшем $m$  означает натуральное число.
\begin{theorem}\label{t2}
Пусть $\alpha  > 0, \, \eta  \geqslant  - 1/2,$ $f(x) \in C^{2m} (0,b), \, b > 0,$ функции  $x^{2\eta  + 1} [B_\eta ^x ]^{k + 1} f(x)$ интегрируемы в нуле и  $\mathop {\lim }\limits_{x \to 0} x^{2\eta  + 1} (d/dx)[B_\eta ^x ]^k f(x) = 0,$ $k = \overline {0,m - 1}. $
 Тогда
\[
[B_{\eta  + \alpha }^x  + \lambda ^2 ]^m J_\lambda  (\eta ,\alpha )f(x) = J_\lambda  (\eta ,\alpha )[B_\eta ^x ]^m f(x),
\]
в частности, если $\lambda  = 0,$ тогда
\[
[B_{\eta  + \alpha }^x ]^m I_{\eta ,\alpha } f(x) = I_{\eta ,\alpha } [B_\eta ^x ]^m f(x).
\]
\end{theorem}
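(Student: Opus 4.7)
The plan is to proceed by induction on $m$, using as the base case $m=1$ the intertwining identity
\[
[B_{\eta+\alpha}^x + \lambda^2]\, J_\lambda(\eta,\alpha) f(x) = J_\lambda(\eta,\alpha)\, B_\eta^x f(x),
\]
which is established in \cite{Kar1}, \cite{Kar2} under exactly the $k=0$ specialization of the hypotheses of the present theorem: continuity of $x^{2\eta+1} B_\eta^x f(x)$ at zero together with $\lim_{x\to 0} x^{2\eta+1} f'(x) = 0$. The necessity of a condition of this form is clear already from~\eqref{op2.2}: differentiation under the integral sign produces boundary terms at $t=0$ that must vanish, and the weight $t^{2\eta+1}$ is precisely what one obtains after one application of $B_\eta^t$ and integration by parts.

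For the inductive step, assume the identity has been proved at level $m-1$, so that
\[
[B_{\eta+\alpha}^x + \lambda^2]^{m-1}\, J_\lambda(\eta,\alpha) f(x) = J_\lambda(\eta,\alpha)\, [B_\eta^x]^{m-1} f(x).
\]
Setting $g(x) = [B_\eta^x]^{m-1} f(x)$ and applying $B_{\eta+\alpha}^x + \lambda^2$ to both sides, the left-hand side becomes $[B_{\eta+\alpha}^x + \lambda^2]^m J_\lambda(\eta,\alpha) f(x)$, while on the right-hand side the base case, applied to $g$, yields
\[
[B_{\eta+\alpha}^x + \lambda^2]\, J_\lambda(\eta,\alpha) g(x) = J_\lambda(\eta,\alpha)\, B_\eta^x g(x) = J_\lambda(\eta,\alpha)\, [B_\eta^x]^m f(x),
\]
which is the desired identity. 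The $\lambda = 0$ statement is an immediate specialization, since then $J_\lambda(\eta,\alpha) = I_{\eta,\alpha}$ by~\eqref{ooek4}--\eqref{oek5} and $B_{\eta+\alpha}^x + \lambda^2 = B_{\eta+\alpha}^x$.

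The delicate step, and essentially the only non-formal part of the argument, is to verify that $g = [B_\eta^x]^{m-1} f$ meets the hypotheses needed to apply the base case. Concretely, one must check that $x^{2\eta+1} B_\eta^x g(x) = x^{2\eta+1} [B_\eta^x]^m f(x)$ is continuous at $x=0$ and that $\lim_{x\to 0} x^{2\eta+1} g'(x) = \lim_{x\to 0} x^{2\eta+1} (d/dx)[B_\eta^x]^{m-1} f(x) = 0$. Both requirements are exactly the assumptions of the theorem at level $k = m-1$, so the chain of conditions imposed on $f$ is calibrated precisely to license each successive use of the base case; this compatibility of the $k$-indexed hypotheses with the inductive cascade is the main point to verify, after which the induction closes cleanly.
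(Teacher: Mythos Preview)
Your proposal is correct. The paper itself does not give an in-text proof of this theorem; immediately before the block of Theorems~\ref{t2}--\ref{t6} it simply records that these results are proved in \cite{Kar1}, \cite{Kar2}, and then states them without argument. Your reduction to the case $m=1$ by induction, citing those same references for the base case, is therefore entirely consonant with the paper's treatment, and in fact supplies the routine inductive step that the paper leaves to the reader. Your verification that the $k$-indexed hypotheses on $f$ are exactly what is needed to apply the $m=1$ identity to $g=[B_\eta^x]^{m-1}f$ at each stage is the only substantive check, and it is correct: the condition at level $k=m-1$ gives precisely the integrability of $x^{2\eta+1}B_\eta^x g$ at the origin and the vanishing of $x^{2\eta+1}g'$ there, while $f\in C^{2m}(0,b)$ ensures $g\in C^2(0,b)$.
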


Пусть функция $u(x,y) = u(x_1 ,\,x_2 ,\, \ldots ,x_n ,\,y)$  непрерывно дифференцируема до порядка $2m$ включительно по переменной $y$  и порядка не меньше чем $m$  по $x.$  $L^{(x)}$ - не зависящий от $y$ линейный дифференциальный оператор конечного порядка по переменной $x \in R^n. $
\begin{theorem}\label{t3}
Пусть $\alpha  > 0,\,\,\eta  \geqslant  - 1/2,$ функции $y^{2\eta  + 1} [B_\eta ^y ]^k u(x,y)$   интегрируемы при $y \to 0$  и $\mathop {\lim }\limits_{y \to 0} y^{2\eta  + 1} ({\partial }/{\partial y})[B_\eta ^y ]^k u(x,y) = 0,\,\,\,k = \overline {0,m - 1}. $
 Тогда
\[
(B_{\eta  + \alpha }^y  + \lambda ^2  + L^{(x)} )^m J_\lambda ^{(y)} (\eta ,\alpha )u(x,y) = J_\lambda ^{(y)} (\eta ,\alpha )(B_\eta ^y  + L^{(x)} )^m u(x,y),
\]
в частности, если $\lambda  = 0,$  тогда
\[
(B_{\eta  + \alpha }^y  + L^{(x)} )^m I_{\eta ,\alpha }^{(y)} u(x,y) = I_{\eta ,\alpha }^{(y)} (B_\eta ^y  + L^{(x)} )^m u(x,y),
\]
здесь верхние индексы в операторах означают переменные, по которым действуют эти операторы.
\end{theorem}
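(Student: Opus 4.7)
The plan is to reduce Theorem~\ref{t3} to Theorem~\ref{t2} by exploiting the fact that $L^{(x)}$ and the one-dimensional operator $J_\lambda^{(y)}(\eta,\alpha)$ act on disjoint sets of variables and hence commute.

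First, I would observe that since $L^{(x)}$ does not involve $y$, it commutes with each of $B_\eta^y$, $B_{\eta+\alpha}^y$, the scalar $\lambda^2$ and $J_\lambda^{(y)}(\eta,\alpha)$ (the last because the integral in~\eqref{op2.2} is taken in $y$, while $x$ enters only as a parameter). Consequently the binomial theorem applies to these commuting operators, giving
\[
(B_{\eta+\alpha}^y + \lambda^2 + L^{(x)})^m = \sum_{k=0}^m \binom{m}{k} (B_{\eta+\alpha}^y + \lambda^2)^{m-k} (L^{(x)})^k,
\]
together with the analogous expansion of $(B_\eta^y + L^{(x)})^m$.

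Second, I would pass $L^{(x)}$ through $J_\lambda^{(y)}(\eta,\alpha)$ in every term on the left-hand side and then invoke Theorem~\ref{t2} in the variable $y$, applied to the function $y \mapsto (L^{(x)})^k u(x,y)$ with $x$ held fixed. This yields, for each $k$,
\[
(B_{\eta+\alpha}^y + \lambda^2)^{m-k} J_\lambda^{(y)}(\eta,\alpha) (L^{(x)})^k u = J_\lambda^{(y)}(\eta,\alpha) (B_\eta^y)^{m-k} (L^{(x)})^k u.
\]
Resumming in $k$ then collapses the series to $J_\lambda^{(y)}(\eta,\alpha)(B_\eta^y + L^{(x)})^m u$, which is the desired right-hand side. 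The case $\lambda = 0$ is recovered by the same argument, since $I_{\eta,\alpha}^{(y)}$ is the $\lambda = 0$ specialization of $J_\lambda^{(y)}(\eta,\alpha)$.

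The main obstacle I expect is the rigorous justification of the two commutation steps: differentiating the kernel of $J_\lambda^{(y)}(\eta,\alpha)$ with respect to the $x_j$ under the integral sign, and verifying that the hypotheses of Theorem~\ref{t2} are satisfied by each intermediate function $(L^{(x)})^k u(x,\cdot)$ rather than only by $u(x,\cdot)$ itself. Both issues require only the regularity of $u$ in the $x$ variables assumed in the statement together with a Leibniz rule for differentiation under the integral, and this is exactly the content of the final clause of the theorem.
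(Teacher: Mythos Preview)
The paper does not actually prove Theorem~\ref{t3}: it is stated together with Theorems~\ref{t2}, \ref{t4}, \ref{t5}, \ref{t6} and their corollaries as results established elsewhere (the references \cite{Kar1}, \cite{Kar2}), so there is no in-paper argument to compare against.

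That said, your proposal is the natural way to derive Theorem~\ref{t3} from Theorem~\ref{t2} and is correct. The key points --- that $L^{(x)}$ commutes with $B_\eta^y$, with $B_{\eta+\alpha}^y+\lambda^2$, and with $J_\lambda^{(y)}(\eta,\alpha)$ because the variables are disjoint, so that the binomial expansion is legitimate --- are exactly what is needed, and the termwise application of Theorem~\ref{t2} with exponent $m-k$ uses only a subset of the hypotheses assumed for $m$. Your closing remark is also on target: to apply Theorem~\ref{t2} to $(L^{(x)})^k u(x,\cdot)$ one must know that the boundary conditions at $y=0$ survive the application of $(L^{(x)})^k$, and this is guaranteed by the smoothness in $x$ stipulated before the theorem (differentiation in $x$ passes through the limits and derivatives in $y$). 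There is no gap in the strategy.
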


\begin{corolarry}\label{sl1}
Пусть $\eta  =  - 1/2,\,\,\alpha  > 0,$  функции ${\partial ^{2k} u(x,y)}/{\partial y^{2k} }$  интегрируемы при $y \to 0$  и $\mathop {\lim }\limits_{y \to 0} ({\partial ^{2k + 1} u(x,y)}/{\partial y^{2k + 1} }) = 0,\,\, k = \overline {0,m - 1}. $
 Тогда
\[
\left( {\frac{{\partial ^2 }}{{\partial y^2 }} + \frac{{2\alpha }}
{y}\frac{\partial }{{\partial y}} + \lambda ^2  + L^{(x)} } \right)^m J_\lambda ^{(y)} \left( { - \frac{1}{2},\alpha } \right)u(x,y) =
\]
\begin{equation}\label{koek2.8}
=J_\lambda ^{(y)} \left( { - \frac{1}
{2},\alpha } \right)\left( {\frac{{\partial ^2 }}{{\partial y^2 }} + L^{(x)}} \right)^m u(x,y).
\end{equation}
\end{corolarry}

		Пусть $D_\eta ^0  = E,$  $D_\eta   = x^{ - 2\eta }\left(d/(xdx)\right)x^{2\eta }, $  $D_\eta ^m  = D_\eta ^{m - 1} D_\eta ^{}  = D_\eta ^{} D_\eta ^{} ...D_\eta ^{} $ - $m$ - ая степень оператора $D_\eta ^{}, $ которая представима в виде $D_\eta ^m  = x^{ - 2\eta }\left(d/(xdx)\right)^m x^{2\eta }. $

\begin{theorem}\label{t4}
Если $\alpha  > 0,\,\,\eta  \geqslant  - 1/2,$  $f(x) \in C^m (0,b),\,\,\,b > 0,$ функции $x^{2\eta  + 1} D_\eta ^{k + 1} f(x)$  - интегрируемы в нуле и  $\mathop {\lim }\limits_{x \to 0} x^{2\eta } D_\eta ^k f(x) = 0,\,\,\,k = \overline {0,m - 1}, $  то верно равенство
\[
D_{\eta  + \alpha }^m J_\lambda  (\eta ,\alpha )f(x) = J_\lambda  (\eta ,\alpha )D_\eta ^m f(x).
\]
\end{theorem}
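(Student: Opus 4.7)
The plan is induction on $m$, reducing to the base case $m=1$. For the base, I would represent $J_\lambda(\eta,\alpha)f$ via \eqref{op2.2} and write
\[
x^{2(\eta+\alpha)} J_\lambda(\eta,\alpha) f(x) = \frac{2}{\Gamma(\alpha)}\int_0^x t^{2\eta+1} K(x,t) f(t)\,dt,\qquad K(x,t):=(x^2-t^2)^{\alpha-1}\bar J_{\alpha-1}\bigl(\lambda\sqrt{x^2-t^2}\bigr).
\]
Since $D_{\eta+\alpha} g(x)=x^{-2(\eta+\alpha)-1}(d/dx)[x^{2(\eta+\alpha)}g(x)]$, the task reduces to computing the derivative of this integral. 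The crucial observation is that $K$ depends on $x,t$ only through $x^2-t^2$, so $\partial_x K=-(x/t)\partial_t K$; consequently the integrand of the $x$-differentiated integral equals $-x\,t^{2\eta}\partial_t K(x,t)f(t)$. I would then integrate by parts in $t$, transferring the $t$-derivative onto $t^{2\eta}f(t)$. The boundary term at $t=0$ vanishes by the hypothesis $\lim_{t\to 0}t^{2\eta}f(t)=0$ (the $k=0$ case), while the boundary contribution at $t=x$ from the IBP cancels exactly with the Leibniz boundary term produced by differentiating the upper limit of the original integral. Invoking the key identity $\partial_t[t^{2\eta}f(t)]=t^{2\eta+1}D_\eta f(t)$ and restoring the prefactors then yields $D_{\eta+\alpha}J_\lambda(\eta,\alpha)f(x)=J_\lambda(\eta,\alpha)D_\eta f(x)$.

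The inductive step is clean: assuming the identity for $m-1$, I would apply $D_{\eta+\alpha}$ to both sides and invoke the base case with $f$ replaced by $D_\eta^{m-1}f$. The hypotheses needed for this second application, namely continuity of $x^{2\eta+1}D_\eta^m f$ at zero and $\lim_{x\to 0}x^{2\eta}D_\eta^{m-1}f=0$, are precisely those assumed for index $k=m-1$, so the induction goes through for every $m\in\mathbb{N}$.

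The main obstacle I expect is the case $0<\alpha\le 1$. For $\alpha>1$ the boundary terms at $t=x$ are individually zero because $K(x,x)=0$; for $\alpha=1$ they are finite but nonzero and must be shown to cancel pairwise; for $0<\alpha<1$ the kernel $K(x,t)$ is singular at $t=x$ and the formal cancellation needs justification. I would circumvent this by first making the substitution $t=x\sin\theta$, which recasts the integral as
\[
J_\lambda(\eta,\alpha)f(x)=\frac{2}{\Gamma(\alpha)}\int_0^{\pi/2}\sin^{2\eta+1}\theta\,\cos^{2\alpha-1}\theta\,\bar J_{\alpha-1}(\lambda x\cos\theta)\,f(x\sin\theta)\,d\theta,
\]
a proper integral over a fixed interval with integrable weight (whenever $\alpha>0$ and $\eta\ge -1/2$). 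On this form both the differentiation under the integral and the subsequent integration by parts are unambiguous, and reverting to the variable $t$ confirms the formal computation carried out above; an analytic continuation in $\alpha$ from $\alpha>1$ would serve as an equally legitimate alternative.
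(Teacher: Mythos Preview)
The paper itself does not prove Theorem~\ref{t4}: the theorems in Section~\ref{s2} are stated with the proofs deferred to the references \cite{Kar1}, \cite{Kar2}. So there is no in-paper argument to compare your proposal against.

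That said, your proposal is correct and is the standard route for such commutation identities. The base case is exactly as you describe: writing $x^{2(\eta+\alpha)}J_\lambda(\eta,\alpha)f(x)$ as an integral with kernel $K(x,t)=\phi(x^2-t^2)$, using $\partial_xK=-(x/t)\partial_tK$, integrating by parts, and invoking the identity $\partial_t\bigl(t^{2\eta}f(t)\bigr)=t^{2\eta+1}D_\eta f(t)$. The boundary term at $t=0$ is killed by the hypothesis $\lim_{t\to 0}t^{2\eta}D_\eta^0 f(t)=0$, and the two contributions at $t=x$ cancel. The induction step is immediate. Your treatment of the range $0<\alpha\le 1$ via the substitution $t=x\sin\theta$ (giving a proper integral over $[0,\pi/2]$ with weight $\sin^{2\eta+1}\theta\cos^{2\alpha-1}\theta$) is the classical device for Erd\'elyi--Kober--type operators and fully justifies differentiation under the integral sign and the subsequent integration by parts; analytic continuation in $\alpha$ from $\alpha>1$ would work equally well.

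One small slip: in the inductive step you say you need ``continuity of $x^{2\eta+1}D_\eta^m f$ at zero,'' but the hypothesis in the theorem is \emph{integrability} of $x^{2\eta+1}D_\eta^{k+1}f$ near zero, and that (not continuity) is precisely what guarantees that $J_\lambda(\eta,\alpha)D_\eta^m f$ is well defined. This does not affect the argument.
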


\begin{corolarry}\label{sl2}
Пусть $\eta  = 0,\,\,\alpha  > 0,$ $f(x) \in C^m (0,b),$ функции $\dfrac{d}{dx}\left(\dfrac{d}{xdx}\right)^k f(x)$ - интегрируемы в нуле и  $\mathop {\lim }\limits_{x \to 0}\left(\dfrac{d}{xdx}\right)^k f(x) = 0,\,\, k = \overline {0,m - 1}.$
Тогда
\[
\left( {\frac{1}
{x}\frac{d}
{{dx}}} \right)^m \int\limits_0^x {(x^2  - t^2 )^{\alpha  - 1} \bar J_{\alpha  - 1} \left( {\lambda \sqrt {x^2  - t^2 } } \right)f(t)tdt}  =
\]
\begin{equation}\label{koek1.9}
 = \int\limits_0^x {(x^2  - t^2 )^{\alpha  - 1} \bar J_{\alpha  - 1} \left( {\lambda \sqrt {x^2  - t^2 } } \right)\left[ {\left( {\frac{1}
{t}\frac{d}
{{dt}}} \right)^m f(t)} \right]tdt}
\end{equation}
в частности, если $\lambda  = 0,$ тогда
\begin{equation}\label{koek1.10}
\left( {\frac{1}{x}\frac{d}{{dx}}} \right)^m \int\limits_0^x {(x^2  - t^2 )^{\alpha  - 1} f(t)tdt}  = \int\limits_0^x {(x^2  - t^2 )^{\alpha  - 1} \left[ {\left( {\frac{1}
{t}\frac{d}{{dt}}} \right)^m f(t)} \right]tdt}.
\end{equation}
\end{corolarry}

\begin{theorem}\label{t5}
Пусть $\alpha  > 0,\,\,\eta  \geqslant  - 1/2,$ $f(x) \in C^{2m} (0,b), $ функции $x^{2\eta  + 1} [B_\eta ^x ]^{k + 1} f(x)$  интегрируемы в нуле и  $\mathop {\lim }\limits_{x \to 0} x^{2\eta  + 1}(d/dx)[B_\eta ^x ]^k f(x) = 0,$ $k = \overline {0,m - 1}. $
Тогда
\[
\frac{{d^{2m} }}{{dx^{2m} }}J_\lambda  (\eta ,\alpha )f(x) = \sum\limits_{j = 0}^m {a_{mj} x^{2j} J_\lambda  (\eta ,\alpha  + m + j)\left( {B_\eta ^x  - \lambda ^2 } \right)^{m + j} f(x)},
\]
\[
\frac{{d^{2m + 1} }}{{dx^{2m + 1} }}J_\lambda  (\eta ,\alpha )f(x) = \sum\limits_{j = 0}^m {b_{mj} x^{2j + 1} J_\lambda  (\eta ,\alpha  + m + j + 1)\left( {B_\eta ^x  - \lambda ^2 } \right)^{m + j + 1} f(x)},
\]
где  постоянные $a_{mj} $  и $b_{mj} $  определяются из следующих рекуррентных соотношений
$a_{00}  = 1,\,\,b_{00}  = 1/2,$  $b_{mj}  = (1/2)a_{mj}  + 2(j + 1)a_{m(j + 1)} ,\,\,0 \leqslant j \leqslant m,$  $a_{mj}  = 0,\,j > m,$ $a_{(m + 1)j}  = (1/2)b_{m(j - 1)}  + (2j + 1)b_{mj}, $  $1 \leqslant j \leqslant m,$ $b_{mj}  = 0,\,j > m,$ $a_{(m + 1)0}  = b_{m0}  = $ $ = 2^{ - (m + 1)} (2m + 1)!!.$
\end{theorem}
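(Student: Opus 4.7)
The plan is to reduce both derivative formulas to a single ``shift'' identity
\[
\frac{1}{x}\frac{d}{dx} J_\lambda(\eta,\alpha) f(x) = \frac{1}{2}\, J_\lambda(\eta,\alpha+1)\left(B_\eta^x - \lambda^2\right) f(x), \qquad (\star)
\]
and then to expand $d^n/dx^n$ algebraically in the basis $\{x^p (x^{-1}\,d/dx)^q\}$. Observe that $(\star)$ \emph{is} the $m=0$ case of the odd-order formula (with $b_{00}=1/2$), while the $m=0$ even case is the trivial identity $a_{00}=1$.

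To prove $(\star)$, I would expand the Bessel--Clifford function via its power series~\eqref{fbk2.3} inside the integral~\eqref{op2.2} and regroup to obtain the representation
\[
J_\lambda(\eta,\alpha) f(x) = \sum_{k=0}^\infty \frac{(-\lambda^2/4)^k}{k!}\, x^{2k}\, I_{\eta, \alpha+k} f(x),
\]
where $I_{\eta,\beta}$ is the $\lambda=0$ operator~\eqref{oek5}. This reduces $(\star)$ to the analogous $\lambda=0$ identity
\[
\frac{1}{x}\frac{d}{dx} I_{\eta,\beta} f(x) = \frac{1}{2}\, I_{\eta, \beta+1} B_\eta^x f(x),
\]
which in the variable $y = x^2$ becomes, after factoring out a power weight, a relation between a derivative of a Riemann--Liouville fractional integral of $s^\eta f(\sqrt s)$ and that same integral of the Bessel transform of $f$; it follows by two integrations by parts, the boundary contributions at $t=0$ being killed by the hypothesis $\lim_{x\to 0} x^{2\eta+1}(d/dx) f(x)=0$ and those at $t=x$ vanishing because $\alpha > 0$. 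Then, differentiating the series for $J_\lambda$ term-by-term, applying the Leibniz rule to $x^{2k}$, substituting the $\lambda=0$ identity, and re-indexing the auxiliary sum generated by the derivative of $x^{2k}$ produces a factor $-\lambda^2/2$; adding it to the $\frac12 B_\eta^x$ contribution reconstructs $(\star)$ exactly.

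Iterating $(\star)$ --- admissible because $(B_\eta^x-\lambda^2)^k f$ inherits the smoothness and limit hypotheses of $f$ --- yields
\[
\left(\frac{1}{x}\frac{d}{dx}\right)^k J_\lambda(\eta,\alpha) f = \frac{1}{2^k}\, J_\lambda(\eta, \alpha+k)\left(B_\eta^x-\lambda^2\right)^k f, \qquad k \geq 0.
\]
Setting $D_y := (2x)^{-1}\,d/dx$, an induction on $n$ using $d/dx = 2x D_y$ and the product rule gives
\[
\frac{d^{2m}}{dx^{2m}} = \sum_{j=0}^m c_{mj}\, x^{2j} D_y^{m+j}, \qquad \frac{d^{2m+1}}{dx^{2m+1}} = \sum_{j=0}^m d_{mj}\, x^{2j+1} D_y^{m+j+1},
\]
with $c_{00}=1$, $d_{00}=2$, and step-up relations $d_{mk} = 2 c_{mk} + 2(k+1) c_{m,k+1}$ and $c_{m+1,k} = (2k+1) d_{mk} + 2 d_{m,k-1}$. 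Since $D_y^p = 2^{-p}(x^{-1}\,d/dx)^p$, combining this expansion with the iterated form of $(\star)$ identifies $a_{mj} = c_{mj}/4^{m+j}$ and $b_{mj} = d_{mj}/4^{m+j+1}$; a direct substitution converts the $c,d$-recurrences into exactly the recurrences stated in the theorem, with initial values $a_{00}=1$, $b_{00}=1/2$, and a short induction on $m$ then delivers $a_{(m+1)0}=b_{m0}=2^{-(m+1)}(2m+1)!!$.

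The main obstacle is the justification of $(\star)$: beyond the (routine) uniform convergence on compact sets needed for term-by-term differentiation of the Bessel--Clifford series, one has to track carefully the boundary contributions produced by the two integrations by parts underlying the $\lambda=0$ identity. The precise hypothesis $\lim_{x\to 0} x^{2\eta+1}(d/dx)[B_\eta^x]^k f(x)=0$, $k=\overline{0,m-1}$, is tailored exactly to kill those boundary terms at each step of the subsequent iteration, and verifying this propagation along the induction is the only non-mechanical piece of the proof.
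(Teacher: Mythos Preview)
The paper does not actually prove Theorem~\ref{t5}: it is stated in Section~\ref{s2} together with Theorems~\ref{t2}--\ref{t6} after the sentence referring the reader to \cite{Kar1}, \cite{Kar2} for the proofs, so there is no in-paper argument to compare against.

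On its own merits your plan is sound and is the natural route. The series expansion $J_\lambda(\eta,\alpha)f=\sum_{k\ge0}\frac{(-\lambda^2/4)^k}{k!}\,x^{2k}I_{\eta,\alpha+k}f$ follows immediately from~\eqref{fbk2.3}, and your reduction of $(\star)$ to the $\lambda=0$ identity via this series checks out term by term (the cross term from differentiating $x^{2k}$ re-sums to $-\tfrac{\lambda^2}{2}J_\lambda(\eta,\alpha+1)f$, giving exactly $\tfrac12 J_\lambda(\eta,\alpha+1)(B_\eta^x-\lambda^2)f$). The algebraic expansion of $d^n/dx^n$ in the basis $x^{p}D_y^{q}$ with $D_y=(2x)^{-1}d/dx$ is correct, and your $(c,d)$-recurrences transform under $a_{mj}=c_{mj}/4^{m+j}$, $b_{mj}=d_{mj}/4^{m+j+1}$ into precisely the recurrences of the statement. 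One point to watch: iterating $(\star)$ up to the top term $(B_\eta^x-\lambda^2)^{2m}f$ (resp.\ $(B_\eta^x-\lambda^2)^{2m+1}f$) formally consumes boundary conditions $\lim_{x\to0}x^{2\eta+1}(d/dx)[B_\eta^x]^kf=0$ for $k$ beyond the range $k=\overline{0,m-1}$ assumed in the statement; since the paper uses the same hypothesis range uniformly across Theorems~\ref{t2}--\ref{t6}, you should either argue that for $\alpha+k$ large the integration-by-parts boundary terms vanish without extra assumptions, or simply record the stronger hypothesis your iteration actually uses.
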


\begin{theorem}\label{t6}
Пусть $\alpha  > 0,\,\,\eta  \geqslant  - 1/2,$, $f(x) \in C^{2m - 1} [0,b] \cap C^{2m} (0,b), $ $\mathop {\lim }\limits_{x \to 0} [B_\eta ^x ]^k f(x) = c_k ,\,\,\,c_k  = const$  и  $\mathop {\lim }\limits_{x \to 0} x^{2\eta  + 1} (d/dx)[B_\eta ^x ]^k f(x) = 0,$ $k = \overline {0,m - 1}. $
 Тогда
\[
\left. {[B_{\eta  + \alpha }^x ]^m J_\lambda  (\eta ,\alpha )f(x)} \right|_{x = 0}  = \frac{{(\alpha  + \eta  + 1)_m }}{{(1/2)_m }}\left. {\frac{{d^{2m} }}
{{dx^{2m} }}J_\lambda  (\eta ,\alpha )f(x)} \right|_{x = 0},
 \]
\[
\frac{d}{{dx}}\left. {[B_{\eta  + \alpha }^x ]^m J_\lambda  (\eta ,\alpha )f(x)} \right|_{x = 0}  = 0,  \,\,
\left. {\frac{{d^{2m + 1} }}
{{dx^{2m + 1} }}J_\lambda  (\eta ,\alpha )f(x)} \right|_{x = 0}  = 0.
\]
в частности, если $\lambda  = 0,$ тогда
\[
\left. {[B_{\eta  + \alpha }^x ]^m I_{\eta ,\,\alpha } f(x)} \right|_{x = 0}  = \frac{{(\alpha  + \eta  + 1)_m }}
{{(1/2)_m }}\left. {\frac{{d^{2m} }}
{{dx^{2m} }}I_{\eta ,\,\alpha } f(x)} \right|_{x = 0}
\]
\[
\frac{d}{{dx}}\left. {[B_{\eta  + \alpha }^x ]^m I_{\eta ,\,\alpha } f(x)} \right|_{x = 0}  = 0, \,\,
\left. {\frac{{d^{2m + 1} }}
{{dx^{2m + 1} }}I_{\eta ,\,\alpha } f(x)} \right|_{x = 0}  = 0.
\]
\end{theorem}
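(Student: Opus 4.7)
The plan is to combine three ingredients: the intertwining identity of Theorem~\ref{t2}, the differentiation formulas of Theorem~\ref{t5}, and an explicit boundary-value formula for $J_\lambda(\eta,\beta)$ at $x=0$. I would first establish the latter as a preparatory lemma. Substituting $t=xs$ in~\eqref{op2.2} cancels the prefactor $x^{-2(\alpha+\eta)}$ and produces
\[
J_\lambda(\eta,\beta)g(x) = \frac{2}{\Gamma(\beta)}\int_0^1 s^{2\eta+1}(1-s^2)^{\beta-1}\bar J_{\beta-1}\bigl(\lambda x\sqrt{1-s^2}\bigr)g(xs)\,ds,
\]
and passing to $x\to 0^+$ together with $\bar J_{\beta-1}(0)=1$ and the beta integral gives
\[
\lim_{x\to 0^+}J_\lambda(\eta,\beta)g(x) = \frac{\Gamma(\eta+1)}{\Gamma(\eta+\beta+1)}\,g(0),
\]
valid whenever $g$ is continuous at $0$. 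This is the workhorse used in every subsequent evaluation.

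For the main ratio identity, I would expand $[B_{\eta+\alpha}^x]^m=\bigl((B_{\eta+\alpha}^x+\lambda^2)-\lambda^2\bigr)^m$ by the binomial theorem and apply Theorem~\ref{t2} to each power, obtaining
\[
[B_{\eta+\alpha}^x]^m J_\lambda(\eta,\alpha)f = \sum_{k=0}^m\binom{m}{k}(-\lambda^2)^{m-k}\,J_\lambda(\eta,\alpha)[B_\eta^x]^k f.
\]
Evaluating at $x=0$ and using the boundary lemma with $\beta=\alpha$ yields
\[
[B_{\eta+\alpha}^x]^m J_\lambda(\eta,\alpha)f\bigr|_{x=0}=\frac{\Gamma(\eta+1)}{\Gamma(\eta+\alpha+1)}\sum_{k=0}^m\binom{m}{k}(-\lambda^2)^{m-k}c_k.
\]
On the other hand, the even-order formula of Theorem~\ref{t5} has each summand multiplied by $x^{2j}$, so at $x=0$ only the $j=0$ term survives and
\[
\frac{d^{2m}}{dx^{2m}}J_\lambda(\eta,\alpha)f\bigr|_{x=0}=a_{m0}\,J_\lambda(\eta,\alpha+m)(B_\eta^x-\lambda^2)^m f\bigr|_{x=0}.
\]
Expanding $(B_\eta^x-\lambda^2)^m$ binomially and applying the boundary lemma again with $\beta=\alpha+m$ reproduces the same alternating combination of the $c_k$, this time with prefactor $a_{m0}\,\Gamma(\eta+1)/\Gamma(\eta+\alpha+m+1)$. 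The recurrence stated in Theorem~\ref{t5} solves to $a_{m0}=2^{-m}(2m-1)!!=(1/2)_m$, so the quotient of the two displays collapses exactly to $\Gamma(\eta+\alpha+m+1)/\bigl((1/2)_m\,\Gamma(\eta+\alpha+1)\bigr)=(\alpha+\eta+1)_m/(1/2)_m$, as claimed.

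For the two vanishing statements, the odd-order formula of Theorem~\ref{t5} immediately gives $d^{2m+1}/dx^{2m+1}\,J_\lambda(\eta,\alpha)f|_{x=0}=0$, since every term carries a factor $x^{2j+1}$ with $j\geq 0$. For $(d/dx)[B_{\eta+\alpha}^x]^m J_\lambda(\eta,\alpha)f|_{x=0}=0$ I would take the binomial expansion from the previous paragraph, differentiate termwise, and invoke the $m'=0$ case of the odd-order formula of Theorem~\ref{t5} applied to each $J_\lambda(\eta,\alpha)[B_\eta^x]^k f$, which again exhibits an explicit factor of $x$ in front. The main obstacle throughout is showing that the $J_\lambda$-factors accompanying these vanishing powers of $x$ remain bounded as $x\to 0^+$; this is exactly where the regularity hypotheses $\lim_{x\to 0}[B_\eta^x]^k f=c_k$ and $\lim_{x\to 0}x^{2\eta+1}(d/dx)[B_\eta^x]^k f=0$ are consumed, guaranteeing the convergence of the integrals defining the inner $J_\lambda$'s and hence finiteness of the quantities being multiplied by the explicit powers of $x$.
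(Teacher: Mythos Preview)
The paper does not actually prove Theorem~\ref{t6}. Section~\ref{s2} records Theorems~\ref{t2}--\ref{t6} as results established in the authors' earlier works~\cite{Kar1},~\cite{Kar2} and then passes directly to their application in Section~\ref{s3}; no argument for Theorem~\ref{t6} appears anywhere in the text. There is therefore no in-paper proof against which to compare your proposal.

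Judged on its own merits your plan is sound and is the natural route. The substitution $t=xs$ yielding
\[
\lim_{x\to0^+}J_\lambda(\eta,\beta)g(x)=\frac{\Gamma(\eta+1)}{\Gamma(\eta+\beta+1)}\,g(0)
\]
is correct; the identification $a_{m0}=2^{-m}(2m-1)!!=(1/2)_m$ from the recurrence in Theorem~\ref{t5} is correct; and reducing both $[B_{\eta+\alpha}^x]^m J_\lambda(\eta,\alpha)f\big|_{x=0}$ and $\frac{d^{2m}}{dx^{2m}}J_\lambda(\eta,\alpha)f\big|_{x=0}$ to the \emph{same} alternating sum $\sum_{k}\binom{m}{k}(-\lambda^2)^{m-k}c_k$, with gamma prefactors whose quotient is exactly $(\alpha+\eta+1)_m/(1/2)_m$, is precisely the mechanism one wants. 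The odd-order vanishing statements follow immediately from the explicit factors $x^{2j+1}$ in Theorem~\ref{t5}, as you say.

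One point to tighten in a full write-up: both of your binomial expansions produce a top term $k=m$ involving $[B_\eta^x]^m f$ at $x=0$, whereas the hypotheses supply limits $c_k$ only for $k\le m-1$. Since that top term enters both sides with the same coefficient, the asserted \emph{proportionality} is unaffected; but to assert finiteness of each side separately you should invoke the remaining hypothesis $f\in C^{2m-1}[0,b]\cap C^{2m}(0,b)$ together with the smoothing effect of $J_\lambda(\eta,\cdot)$, rather than leave this implicit.
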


	Доказанные теоремы позволяют сводить сингулярные (или вырождающиеся) уравнения высокого как четного, так и нечетного порядка к не сингулярным уравнениям и тем самым поставить и исследовать корректные  начальные и граничные задачи для таких уравнений.

\section{\large Приложение оператора Эрдейи-Кобера к решению поставленной задачи}\label{s3}

Предположим, что решение задачи \{\eqref{eq1}, \eqref{ic2}\} существует. Это решение будем искать в виде
\begin{equation}\label{irz11}
u(x,t) = J_\lambda ^{(t)} (\eta ,\alpha )U(x,t)
\end{equation}
где $\eta  =  - 1/2,\,\alpha  = \gamma  + 1/2 > 0,$ $U(x,t)$ -  неизвестная функция.

Подставим \eqref{irz11} в начальные  условия \eqref{ic2} и применим теоремы~\ref{t5} и \ref{t6}. Затем, подставляя в уравнение \eqref{eq1} и используя теорему~\ref{t3}, при $\eta = - 1/2,\,\, L^{(x)}  \equiv  - \Delta$  получим следующую задачу нахождения решения $U(x,t)
$  поливолнового уравнения
\begin{equation}\label{peq3.10}
L_{0,0}^m (u) \equiv \left( {\frac{{\partial ^2 }}
{{\partial t^2 }} - \Delta} \right)^m U(x,t) = 0, \,\, (x,t) \in \Omega,
\end{equation}
удовлетворяющего начальным условиям
\begin{equation}\label{peq3.11}
\left. {\frac{{\partial ^{2k} U}}{{\partial t^{2k} }}} \right|_{t = 0}  = \Phi _k (x), \,\,   \left. {\frac{{\partial ^{2k + 1} U}}{{\partial t^{2k + 1} }}} \right|_{t = 0}  = 0, \,\,  x \in R^n ,\,\,\,k = \overline {1,m - 1},
\end{equation}
где $\Phi _k (x) = \sum\limits_{j = 0}^k {a_j C_k^j \lambda ^{2(k - j)} \varphi _j (x)}, $  $a_j  = \Gamma \left( {(2j + 1)/2 + \alpha } \right)/\Gamma \left( {(2j + 1)/2} \right),$ $C_k^j  = k!/[j!(k - j)!]$ - биномиальный коэффициент, $j=\overline {0,k}, k = \overline {0,m - 1}.$

Задача Коши для поливолнового уравнения  изучалась в работах \cite{Wei2}, \cite{Kra1}, \cite{Ald2} и \cite{GK}.  Однако построенные в этих работах формулы содержат весма сложные кратные интегралы.
 Чтобы получить более простые формулы, аналогичные формулам Кирхгофа, в данной работе, задача Коши для поливолнового уравнения~\eqref{peq3.10} решается методом сферических средних.

Для полноты изложения рассмотрим задачу Коши для поливолнового уравнения \eqref{peq3.10}, удовлетворяющего начальным условиям
\begin{equation}\label{ic3.1}
\left. {\frac{{\partial ^{2k} U}}{{\partial t^{2k} }}} \right|_{t = 0}  = \Phi _k (x),\,\,\left. {\frac{{\partial ^{2k + 1} U}}{{\partial t^{2k + 1} }}} \right|_{t = 0}  = \Psi _k (x),\,\,x \in R^n ,\,\,\,k = \overline {1,m - 1},
\end{equation}
где $\Phi _k (x)$ и $\Psi _k (x),$  $(k = \overline {0,m - 1} )$ -заданные функции.

Пусть $U_0 (x,t) = U(x,t)$ и $U_k (x,t) = \left( {\dfrac{{\partial ^2 }}{{\partial t^2 }} - \Delta } \right)^k U_0 (x,t).$  Тогда задача \{\eqref{peq3.10}, \eqref{ic3.1}\} эквивалентна следующей задаче о нахождении функций $U_k (x,t),$ $k = \overline {0,m - 1},$  удовлетворяющих  системе уравнений
\begin{equation}\label{seq3.2}
\left\{ \begin{gathered}
  \frac{{\partial ^2 U_k }}
{{\partial t^2 }} - \Delta U_k  = U_{k + 1} ,\,\, (x,t) \in \Omega,\,\, k = \overline {0,m - 2} , \hfill \\
  \frac{{\partial ^2 U_{m - 1} }}
{{\partial t^2 }} - \Delta U_{m - 1}  = 0,\,\ (x,t) \in \Omega \hfill \\
\end{gathered}  \right.
\end{equation}
и начальным условиям
\begin{equation}\label{ic3.3}
U_k (x,0) = f_k (x), \;\;\; \frac{{\partial U_k (x,0)}}{{\partial t}} = g_k (x),\;\;\; x \in R^n ,\,\,\,\,\,k = \overline {0,m - 1},
\end{equation}
где
\begin{equation}\label{if3.4}
f_k (x) = \sum\limits_{j = 0}^k {( - 1)^j C_k^j \Delta^{k - j} \Phi _j (x)}, g_k (x) = \sum\limits_{j = 0}^k {( - 1)^j C_k^j \Delta^{k - j} \Psi _j (x)}, k = \overline {0,m - 1}.
\end{equation}

Для решения этой задачи применим метод сферических средних~\cite[c.694]{Kur1}.

Пусть $S(x,r) = \left\{ {\xi :\,\left| {\xi  - x} \right| = r} \right\}$ - сфера радиуса $r>0$
 с центром в точке $x \in R^n, $ где $\left| {\xi  - x} \right|^2  = \sum\limits_{k = 1}^n {(\xi _k  - x_k )^2 } $
 расстояние между точками $\xi $  и $x.$ Пусть,  далее,
\begin{equation}\label{sm3.5}
V_k (x,t,r) = \frac{1}{{\omega _n r^{n - 1} }}\int\limits_{\left| {\xi  - x} \right| = r} {U_k (\xi ,t)d\sigma _\xi  }  = \frac{1}{{\omega _n }}\int\limits_{\left| \eta  \right| = 1} {U_k (x + r\eta ,\;t)d\sigma _\eta},    \end{equation}
где  $\omega _n  = 2\pi ^{(n/2)} /\Gamma \left( {n/2} \right), $   $S(0,1) = \{ \eta :\,\left| \eta  \right| = 1\} $
- единичная сфера с центром в начале координат, $d\sigma _\xi  $ - элемент поверхности сферы $S(x,r),$ а $d\sigma _{\eta}, $ -элемент поверхности единичной сферы, причем $d\sigma _\xi   = r^{n - 1} d\sigma _{\eta}, $ $\Gamma (z)$ - гамма - функция Эйлера.

	Очевидно, что равенство~\eqref{sm3.5} является средним арифметическим значением функций $U_k (x,t)$  на сфере $S(x,r).$

Применяя классический метод сферических средних~\cite[c.694]{Kur1}, можно показать, что если функции $U_k (x,t)$ являются решением задачи Коши для системы уравнений \{\eqref{seq3.2}, \eqref{ic3.3}\}, то функции $V_k (x,t,r)$  будут решением системы уравнений
\begin{equation}\label{seq3.6}
\frac{\partial }
{{\partial r}}\left( {r^{n - 1} \frac{{\partial V_k }}
{{\partial r}}} \right) = r^{n - 1} \left( {\frac{{\partial ^2 V_k }}
{{\partial t^2 }} - V_{k + 1} } \right),\;
k = \overline {0,m - 1}; \,\,V_m (x,t) = 0, \; r > 0,\,t > 0,
\end{equation}
удовлетворяющих начальным
\begin{equation}\label{ic3.7}
V_k (x,t,0) = F_k (x,r),\,\,\,\, \left. {\frac{{\partial V_k }}
{{\partial t}}} \right|_{t = 0}  = G_k (x,r),\,\,x \in R^n ,\,\,r >0, \;\; k = \overline {0,m - 1}
\end{equation}
и  граничным
\[
\frac{{\partial V_k(x,t,0)}}
{{\partial r}} = 0,\,\,x \in R^n ,\,\,t > 0,\;\; k = \overline {0,m - 1}
\]
условиям, где
\begin{equation}\label{if3.9}
F_k (x,r) = \frac{1}{{\omega _n r^{n - 1} }}\int\limits_{\left| {\xi  - x} \right| = r} {f_k (\xi )d\sigma _\xi  }  = \frac{1}{{\omega _n }}\int\limits_{\left| \eta  \right| = 1} {f_k (x + r\eta )d\sigma _\eta},
\end{equation}
\begin{equation}\label{if3.10}
G_k (x,r) = \frac{1}{{\omega _n r^{n - 1} }}\int\limits_{\left| {\xi  - x} \right| = r} {g(\xi )d\sigma _\xi  }  = \frac{1}{{\omega _n }}\int\limits_{\left| \eta  \right| = 1} {g(x + r\eta )d\sigma _\eta},
\end{equation}
причем $\dfrac{{\partial F_k }}{{\partial r}}(x,0) = 0,\,\,\,\dfrac{{\partial G_k }}{{\partial r}}\,(x,0) = 0,\;\; k = \overline {0,m - 1}.$

Каждое уравнение системы~\eqref{seq3.6} перепишем в виде
$$
\frac{1}
{r}\frac{\partial }
{{\partial r}}\left( {r^{n - 1} \frac{{\partial V_k }}
{{\partial r}}} \right) - \frac{{\partial ^2 }}
{{\partial t^2 }}\left( {r^{n - 2} V_k } \right) + \left( {r^{n - 2} V_{k + 1} } \right) = 0.
$$

	Пусть $n$  - нечетное натуральное число:  $n = 2p + 1,$ $p \in N.$  Тогда, применяя к последнему равенству  оператор $\left( {\dfrac{1}{r}\dfrac{\partial }{{\partial r}}} \right)^{p - 1}, $ получим
\begin{equation}\label{seq3.11}
\left( {\frac{1}
{r}\frac{\partial }
{{\partial r}}} \right)^p \left( {r^{2p} \frac{{\partial V_k }}
{{\partial r}}} \right) - \frac{{\partial ^2 }}
{{\partial t^2 }}\left[ {\left( {\frac{1}
{r}\frac{\partial }
{{\partial r}}} \right)^{p - 1} \left( {r^{2p - 1} V_k } \right)} \right] + \left( {\frac{1}
{r}\frac{\partial }
{{\partial r}}} \right)^{p - 1} \left( {r^{2p - 1} V_{k + 1} } \right) = 0.
\end{equation}

Далее, воспользуемся следующей леммой~\cite{Eva1}:
\begin{lemma}\label{lem1}
Если $w(r) \in C^{p + 1} ,\,\,p = 1,\,2,\, \ldots, $ то справедливы равенства
\begin{equation}\label{leq3.12}
\frac{{d^2 }}{{dr^2 }}\left( {\frac{1}{r}\frac{d}{{dr}}} \right)^{p - 1} \left( {r^{2p - 1} w} \right) = \left( {\frac{1} {r}\frac{d}{{dr}}} \right)^p \left( {r^{2p} \frac{{dw}}{{dr}}} \right),
\end{equation}
\begin{equation}\label{leq3.13}
\left( {\frac{1}{r}\frac{d}	{{dr}}} \right)^{p - 1} \left( {r^{2p - 1} w} \right) = \sum\limits_{j = 0}^{p - 1} {A_j^p r^{j + 1} \frac{{d^j w}}{{dr^j }}},
\end{equation}
где $A_j^p  = const,$  причем $A_0^p  = 1 \cdot 3 \cdot 5 \cdot 7 \cdots (2p - 1) = (2p - 1)!!.$
\end{lemma}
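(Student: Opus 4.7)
\medskip
\noindent\textbf{Proof plan.}
The plan is to derive both identities \eqref{leq3.12} and \eqref{leq3.13} from a single more general expansion formula for the iterated operator $L := \tfrac{1}{r}\tfrac{d}{dr}$, proved by induction on the number of iterations. Writing
\[
L^q(r^\alpha w) = \sum_{j=0}^q c_{j,\alpha,q}\, r^{\alpha - 2q + j}\, w^{(j)}
\]
for an arbitrary real exponent $\alpha$ and integer $q \geq 0$, the elementary identity $L(r^\beta w^{(k)}) = \beta r^{\beta - 2} w^{(k)} + r^{\beta - 1} w^{(k+1)}$ gives the recurrence
\[
c_{j,\alpha,q+1} = (\alpha - 2q + j)\, c_{j,\alpha,q} + c_{j-1,\alpha,q},
\]
with $c_{0,\alpha,0} = 1$ and $c_{j,\alpha,q} = 0$ outside $0 \leq j \leq q$. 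The base case $q = 0$ is trivial, so this expansion holds for all $q$ and all $\alpha$.

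Identity \eqref{leq3.13} is the specialization $q = p-1$, $\alpha = 2p - 1$, giving $A_j^p = c_{j,\,2p-1,\,p-1}$ and exponent $r^{\alpha - 2q + j} = r^{j+1}$. Iterating the recurrence at $j = 0$ yields $c_{0,\alpha,q} = \alpha(\alpha-2)(\alpha-4)\cdots(\alpha - 2(q-1))$; substituting $\alpha = 2p-1$, $q = p-1$ gives the top coefficient $A_0^p = (2p-1)(2p-3)\cdots 3 = (2p-1)!!$ as claimed. For \eqref{leq3.12} I would differentiate \eqref{leq3.13} twice in $r$ using
\[
\frac{d^2}{dr^2}\bigl(r^{j+1} w^{(j)}\bigr) = j(j+1)r^{j-1} w^{(j)} + 2(j+1)r^j w^{(j+1)} + r^{j+1} w^{(j+2)},
\]
re-index so that the running variable is the order of the derivative of $v := dw/dr$, and compare the result with the companion expansion $L^p(r^{2p} v) = \sum_{j=0}^p c_{j,\,2p,\,p}\, r^j v^{(j)}$ coming from the general formula at $(\alpha, q) = (2p, p)$. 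The resulting coefficient identity
\[
c_{j,\,2p,\,p} = (j+1)(j+2)\,c_{j+1,\,2p-1,\,p-1} + 2(j+1)\,c_{j,\,2p-1,\,p-1} + c_{j-1,\,2p-1,\,p-1}
\]
then follows from the universal recurrence by a short bookkeeping computation.

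The main obstacle is that the inductive hypothesis in the form of \eqref{leq3.13} is not preserved by $L$: passing from $p$ to $p+1$ would require simultaneously shifting the power of $r$ from $2p-1$ to $2p+1$ and the power of $L$ from $p-1$ to $p$, and these two shifts cannot be implemented within the single-parameter family of identities stated in the lemma. Enlarging the inductive claim to the two-parameter expansion with arbitrary $(\alpha, q)$ decouples these shifts and reduces every remaining step --- the verification $A_0^p = (2p-1)!!$ and the coefficient matching needed for \eqref{leq3.12} --- to routine manipulations of the universal recurrence.
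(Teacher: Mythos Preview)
Your plan is sound, and in fact the paper does not prove this lemma at all --- it simply cites Evans \cite{Eva1}. So your argument supplies what the paper omits. In Evans the two identities are established by a direct induction on $p$ (pushing one extra factor of $r^2$ through $L=\tfrac1r\tfrac d{dr}$ and one extra $L$ at each step); your route is different and somewhat more informative: you prove the two--parameter expansion $L^q(r^\alpha w)=\sum_j c_{j,\alpha,q}\,r^{\alpha-2q+j}w^{(j)}$ once, and then both \eqref{leq3.13} and \eqref{leq3.12} drop out as specializations. This has the advantage that the coefficients $A_j^p=c_{j,\,2p-1,\,p-1}$ are pinned down by an explicit recurrence, not merely asserted to exist.

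One small point to tighten. The ``universal recurrence'' you wrote, $c_{j,\alpha,q+1}=(\alpha-2q+j)\,c_{j,\alpha,q}+c_{j-1,\alpha,q}$, moves in $q$ only, at fixed $\alpha$. The coefficient identity you need for \eqref{leq3.12},
\[
c_{k,\,2p,\,p}=(k+1)(k+2)\,c_{k+1,\,2p-1,\,p-1}+2(k+1)\,c_{k,\,2p-1,\,p-1}+c_{k-1,\,2p-1,\,p-1},
\]
compares two different values of $\alpha$ ($2p$ versus $2p-1$), so it does not follow from the $q$--recurrence alone. You also need the $\alpha$--shift relation
\[
c_{j,\alpha+1,q}=c_{j,\alpha,q}+(j+1)\,c_{j+1,\alpha,q},
\]
which comes immediately from your same expansion by writing $r^{\alpha+1}w=r^\alpha(rw)$ and applying Leibniz to $(rw)^{(j)}=r\,w^{(j)}+j\,w^{(j-1)}$. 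Combine the $q$--step at $\alpha=2p$ with two instances of this $\alpha$--shift and the desired identity falls out in one line. With that addition the plan is complete.
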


Введя обозначение
\begin{equation}\label{uf3.14}
W_k (x,t,r) = \left( {\frac{1}{r}\frac{\partial }{{\partial r}}} \right)^{p - 1} \left( {r^{2p - 1} V_k } \right),\,k = \overline {0,m - 1}
\end{equation}
и учитывая лемму~\ref{lem1}, из  равенства~\eqref{seq3.11}, \eqref{leq3.12} и начальных условий~\eqref{ic3.7}  получим задачу о нахождении функций $W_k (x,t,r), \;\;k=\overline {0,m - 1},$  удовлетворяющих системе уравнений
\begin{equation}\label{seq3.15}
\frac{{\partial ^2 W_k }}
{{\partial t^2 }} - \frac{{\partial ^2 W_k }}
{{\partial r^2 }} = W_{k + 1} ,\,\,\,\,k = \overline {0,m - 1};\;\;\; W_m (x,t,r) = 0,\,\, r > 0,\,\,\,t > 0,    \end{equation}
а также начальным
\begin{equation}\label{ic3.16}
W_k (x,0,r) = \Phi _k (x,r),\;\;\; \frac{{\partial W_k (x,0,r)}}
{{\partial t}} = \Psi _k (x,r),\,\,\,x \in R^n ,\,\,r > 0, \;\; k = \overline {0,m - 1}
\end{equation}
и граничным
\begin{equation}\label{bc3.17}
W_k (x,t,0) = 0,\,\,\,x \in R^n ,\,\,\,t > 0, \;\; k = \overline {0,m - 1}
\end{equation}
условиям, где  	
\begin{equation}\label{if3.18}
\Phi _k (x,r) = \left( {\frac{1}{r}\frac{\partial }{{\partial r}}} \right)^{p - 1} \left( {r^{2p - 1} F_k (x,r)} \right),
\end{equation}
\begin{equation}\label{if3.18a}
\Psi _k (x,r) = \left( {\frac{1}{r}\frac{\partial }
{{\partial r}}} \right)^{p - 1} \left( {r^{2p - 1} G_k (x,r)} \right).
\end{equation}

Учитывая граничные условия~\eqref{bc3.17}, начальные функции  $\Phi _k (x,r)$ и  $\Psi _k (x,r)$  ($k = \overline {0,m - 1}$) продолжим нечетным образом на полуинтервал $r < 0$ и продолженные функции обозначим через $\tilde \Phi _k (x,r)$  и $\tilde \Psi _k (x,r)$  соответственно.

	Тогда, в области $\Omega _1  = \{ (r,t):\, - \infty  < r <  + \infty ,\,0 < t <  + \infty \} $ получим задачу нахождения решений  $W_k (x,t,r)$ системы уравнений~\eqref{seq3.15},  удовлетворяющих начальным условиям
\begin{equation}\label{ic3.19}
W_k (x,0,r) = \tilde \Phi _k (x,r),\;\;\; \frac{{\partial W_k (x,0,r)}}
{{\partial t}} = \tilde \Psi _k (x,r),\,\,\,x \in R^n ,\,\,\,r \in R^1, \;\; k = \overline {0,m - 1}.    \end{equation}

Решение задачи \{\eqref{seq3.15}, \eqref{ic3.19}\} имеет вид \cite{Kar3}:
$$
W_0 (x,t,r) = \frac{1}
{2}\left[ {\tilde \Phi _0 (x,r + t) + \tilde \Phi _0 (x,r - t)} \right] + \frac{1}
{2}\int\limits_{r - t}^{r + t} {\tilde \Psi _0 (x,s)ds}  +
$$
$$
 + \sum\limits_{k = 1}^{m - 1} {\frac{1}{{2^{2k + 1} (k!)^2 }}} \frac{\partial }{{\partial t}}\int\limits_{r - t}^{r + t} {\left[ {t^2  - (r - s)^2 } \right]^k \tilde \Phi _k (x,s)ds + }
$$
\begin{equation}\label{ss3.23}
 + \sum\limits_{k = 1}^{m - 1} {\frac{1}{{2^{2k + 1} (k!)^2 }}} \int\limits_{r - t}^{r + t} {\left[ {t^2  - (r - s)^2 } \right]^k \tilde \Psi _k (x,s)ds}.
\end{equation}

Для нахождения решения задачи Коши \{\eqref{peq3.10}, \eqref{ic3.1}\} воспользуемся следующим свойством среднее сферических~\cite[c. 694]{Kur1}:
\begin{equation}\label{sc3.24}
U(x,t) = U_0 (x,t) = \mathop {\lim }\limits_{r \to 0} V_0 (x,t,r).
\end{equation}

	Применяя  \eqref{leq3.13}, равенство \eqref{uf3.14} можно представить в виде
\[
V_0 (x,t,r) = \frac{{W_0 (x,t,r)}}{{A_0^p r}} - o(r).
\]

	Учитывая это,  из \eqref{sc3.24} получим
\begin{equation}\label{sc3.26}
U(x,t) = \mathop {\lim }\limits_{r \to 0} V_0 (x,t,r) = \frac{1}
{{A_0^p }}\mathop {\lim }\limits_{r \to 0} \frac{{W_0 (x,t,r)}}{r}.
\end{equation}

	В силу нечетности функций $\tilde \Phi _k (x,r)$ и $\tilde \Psi _k (x,r)$  относительно переменной $r,$
 из \eqref{ss3.23} следует, что $W_0 (x,t,0) = 0.$

  Для вычисления предела \eqref{sc3.26} применим правило Лопиталля~\cite[c.270]{IP1}. \, Тогда,  после  несложных преобразований, имеем
$$
U(x,t) = \frac{1}
{{A_0^p }}\mathop {\lim }\limits_{r \to 0} \frac{{\partial W_0 }}
{{\partial r}} = \frac{1}
{{A_0^p }}\left[ {\frac{\partial }
{{\partial t}}\Phi _0 (x,t) + \Psi _0 (x,t)} \right] +
$$
$$
+ \frac{1}{{A_0^p }}\sum\limits_{k = 1}^{m - 1} {\frac{1}{{2^{2k - 1} (k - 1)!k!}}} \frac{\partial }{{\partial t}}\int\limits_0^t {\left( {t^2  - s^2 } \right)^{k - 1} s\Phi _k (x,s)ds + }
$$
\begin{equation}\label{sc3.27}
+ \frac{1}{{A_0^p }}\sum\limits_{k = 1}^{m - 1} {\frac{1}{{2^{2k - 1} (k - 1)!k!}}} \int\limits_0^t {\left( {t^2  - s^2 } \right)^{k - 1} s\Psi _k (x,s)ds}.
\end{equation}

В силу \eqref{leq3.13} для функций  $s^{2p - 1} F_k (x,s)$ и  $s^{2p - 1}G_k (x,s)$  условия следствия 2  выполняются. Поэтому, применяя формулу~\eqref{koek1.10} при $\alpha  = k > 0$ к равенству~\eqref{sc3.27}, с учетом \eqref{if3.18} и \eqref{if3.18a} получим
$$
U(x,t) = \frac{1}{{A_0^p }}\left[ {\frac{\partial }{{\partial t}}\left( {\frac{1}{t}\frac{\partial }{{\partial t}}} \right)^{p - 1} \left( {t^{2p - 1} F_0 (x,t)} \right) + \left( {\frac{1}{t}\frac{\partial }{{\partial t}}} \right)^{p - 1} \left( {t^{2p - 1} G_0 (x,t)} \right)} \right] +
$$
$$
 + \frac{1}{{A_0^p }}\sum\limits_{k = 1}^{m - 1} {\frac{1}{{2^{2k - 1} (k - 1)!k!}}\frac{\partial }{{\partial t}}} \left( {\frac{1}{t}\frac{\partial }{{\partial t}}} \right)^{p - 1} \int\limits_0^t {\left( {t^2  - s^2 } \right)^{k - 1} s^{2p} F_k (x,s)ds + }
$$
\begin{equation}\label{sc3.28}
 + \frac{1}{{A_0^p }}\sum\limits_{k = 1}^{m - 1} {\frac{1}{{2^{2k - 1} (k - 1)!k!}}} \left( {\frac{1}{t}\frac{\partial }{{\partial t}}} \right)^{p - 1} \int\limits_0^t {\left( {t^2  - s^2 } \right)^{k - 1} s^{2p} G_k (x,s)ds}.
\end{equation}

Подставляя \eqref{if3.9} и \eqref{if3.10} в равенство \eqref{sc3.28}, имеем
$$
U(x,t) =\gamma _n \left[ {\frac{\partial }{{\partial t}}\left( {\frac{1}{t}\frac{\partial }
{{\partial t}}} \right)^{\frac{{n - 3}}{2}} \left( {\frac{1}{t}\int\limits_{\left| {\xi  - x} \right| = t} {f_0 (\xi )d\sigma _\xi  } } \right)}+\right.
$$
$$
+\left. {\left( {\frac{1}{t}\frac{\partial }{{\partial t}}} \right)^{\frac{{n - 3}}
{2}} \left( {\frac{1}{t}\int\limits_{\left| {\xi  - x} \right| = t} {g_0 (\xi )d\sigma _\xi  } } \right)} \right] +
$$
$$
+ \gamma _n \sum\limits_{k = 1}^{m - 1} {\frac{1}{{2^{2k - 1} (k - 1)!k!}}} \frac{\partial }{{\partial t}}\left( {\frac{1}{t}\frac{\partial }{{\partial t}}} \right)^{\frac{{n - 3}}{2}} \int\limits_{\left| {\xi  - x} \right| \le t} {\left[ {t^2  - \left| {\xi  - x} \right|^2 } \right]^{k - 1} f_k (\xi )d\xi }  +
$$
\begin{equation}\label{sc3.29}
+ \gamma _n \sum\limits_{k = 1}^{m - 1} {\frac{1}{{2^{2k - 1} (k - 1)!k!}}} \left( {\frac{1}{t}\frac{\partial }{{\partial t}}} \right)^{\frac{{n - 3}}{2}} \int\limits_{\left| {\xi  - x} \right| \le t} {\left[ {t^2  - \left| {\xi  - x} \right|^2 } \right]^{k - 1} g_k (\xi )d\xi},
\end{equation}
где $\gamma _n  = [1 \cdot 3 \cdot 5 \cdot ... \cdot (n - 2)\omega _n ]^{ - 1},$ $f_k (x),$ $g_k (x),$ $k = \overline {0,m - 1} $ - функции определяемые через заданные начальные данные равенствами~\eqref{if3.4}.

Как и в случае волнового уравнения \cite{Kur1}, можно показать, что если $\varphi _j (x) \in C^{q_1 } (\Omega _n ),$ $\psi _j (x) \in C^{q_2 } (\Omega _n ),$ $q_1  = [(n + 1)/2] + 2(m - j) + 2,$  $q_2  = [(n + 1)/2] + 2(m - j) + 1,$ $j = \overline {0,m - 1}, $   где $[(n + 1)/2]$ -означает целую часть числа $(n + 1)/2,$ то функция $U(x,t),$ определяемая равенством~\eqref{sc3.29},   при нечетном $n$  является классическим решением задачи Коши \{\eqref{peq3.10}, \eqref{ic3.1}\},  из которого при $m = 1$  следует формула Кирхгофа для волнового уравнения.

При четном $n,$ применяя метод спуска Адамара \cite[c.682]{Kur1}, \cite[c.74]{Eva1}, из формулы \eqref{sc3.29} также можно получить явную формулу решения задачи Коши \{\eqref{peq3.10}, \eqref{ic3.1}\}, которая имеет вид:
$$
U(x,t) = \tilde \gamma _n \sum\limits_{k = 0}^{m - 1} {\frac{{\Gamma ^{ - 1} (k + 1/2)}}{{2^{2k} k!}}} \frac{\partial }
{{\partial t}}\left( {\frac{1}
{t}\frac{\partial }
{{\partial t}}} \right)^{\frac{{n - 2}}
{2}} \int\limits_{\left| {\xi  - x} \right| < t} {\left[ {t^2  - \left| {\xi  - x} \right|^2 } \right]^{k - (1/2)} f_k (\xi )d\xi }  +
$$
\begin{equation}\label{sc3.30}
 + \tilde \gamma _n \sum\limits_{k = 0}^{m - 1} {\frac{{\Gamma ^{ - 1} (k + 1/2)}}{{2^{2k} k!}}} \left( {\frac{1}
{t}\frac{\partial }
{{\partial t}}} \right)^{\frac{{n - 2}}
{2}} \int\limits_{\left| {\xi  - x} \right| < t} {\left[ {t^2  - \left| {\xi  - x} \right|^2 } \right]^{k - (1/2)} g_k (\xi )d\xi },
\end{equation}
где  $\tilde \gamma _n  = [\omega _{n + 1} 2 \cdot 4 \cdot 6 \cdot ... \cdot (n - 1)]^{ - 1}.$

Аналогично, как и в случае нечетного $n,$ можно показать, что если $\varphi _j (x) \in C^{q_1 } (\Omega _n ),$ $\psi _j (x) \in C^{q_2 } (\Omega _n ),$ $q_1  = [n/2] + 2(m - j) + 2,$ $q_2  = [n/2] + 2(m - j) + 1,$ $j = \overline {0,m - 1}, $ то функция $U(x,t),$  определяемая равенством~\eqref{sc3.30},   при четном $n$  является классическим решением задачи Коши \{\eqref{peq3.10}, \eqref{ic3.1}\}.

\section{\large Решение основной задачи}\label{s4}

	Вернемся к исследованию задачи \{\eqref{eq1}, \eqref{ic2}\}. Учитывая решение вспомогательной задачи  \{\eqref{peq3.10}, \eqref{ic3.1}\} при нечетном $n = 2p + 1,\,p \in N$  и $\Psi _k (x) = 0,$ из \eqref{sc3.28}, имеем
\[
U(x,t) = \frac{1}
{{A_0^p }}\frac{\partial }
{{\partial t}}\left( {\frac{1}
{t}\frac{\partial }
{{\partial t}}} \right)^{p - 1} \left( {t^{2p - 1} F_0 (x,t)} \right) +
\]
\begin{equation}\label{sc39}
+ \frac{1}
{{A_0^p }}\sum\limits_{k = 1}^{m - 1} {\frac{{2^{ - 2k+1} }}
{{(k - 1)!k!}}} \frac{\partial }
{{\partial t}}\left( {\frac{1}
{t}\frac{\partial }
{{\partial t}}} \right)^{p - 1} \int\limits_0^t {(t^2  - \tau ^2 )^{k - 1} \tau ^{2p} F_k (x,\tau )d\tau },
\end{equation}
где  функции $F_k (x,\tau )$  определяются равенством \eqref{if3.9}, $A_0^p  = 1 \cdot 3 \cdot 5 \cdot 7 \cdots (2p - 1) = (2p - 1)!!.$

Подставляя \eqref{sc39} в  \eqref{irz11},  после несложных преобразований, получим
\begin{equation}\label{sc4.2}
u(x,t) = \frac{{t^{1 - 2\alpha }}}{{A^p_0 \Gamma (\alpha )}} Q_0 (x,t) + \frac{{t^{1 - 2\alpha } }}{{A^p_0 \Gamma (\alpha )}} \sum\limits_{k = 1}^{m - 1} {\frac{{2^{ - 2k} }}{{(k - 1)!k!}}} Q_k (x,t),
\end{equation}
где
\begin{equation*}\label{sc4.3}
Q_0 (x,t) = \int\limits_0^t {(t^2  - s^2 )^{\alpha  - 1} \bar J_{\alpha  - 1} \left( {\lambda \sqrt {t^2  - s^2 } } \right)\left[ {\frac{\partial }{{\partial s}}\left( {\frac{1}{s}\frac{\partial }{{\partial s}}} \right)^{p - 1} \left( {s^{2p - 1} F_0 (x,s)} \right)} \right]ds},
\end{equation*}
$$
Q_k (x,t) = \int\limits_0^t {(t^2  - s^2 )^{\alpha  - 1} \bar J_{\alpha  - 1} \left( {\lambda \sqrt {t^2  - s^2 } } \right) \times}
$$
\begin{equation*}\label{sc4.4}
\times \left[ {\frac{\partial }{{\partial s}}\left( {\frac{1}{s}\frac{\partial }{{\partial s}}} \right)^{p - 1} \int\limits_0^s {(s^2  - \tau ^2 )^{k - 1} \tau ^{2p} F_k (x,\tau )d\tau } } \right]ds.
\end{equation*}

В силу \eqref{leq3.13} для функций $s^{2p - 1} F_0 (x,s)$  и  $s^{2p - 1} \tilde F_k (x,s),$ где
\[
\tilde F_k (x,s) = s^{2k} \int\limits_0^1 {(1 - z^2 )^{k - 1} z^{2p} F_k (x,sz)dz},
\]
условия следствия 2 выполняются. Поэтому применяя формулу \eqref{koek1.9}, имеем
\[
Q_0 (x,t) = \left( {\frac{1}{t}\frac{\partial }{{\partial t}}} \right)^p \int\limits_0^t {(t^2  - s^2 )^{\alpha  - 1} \bar J_{\alpha  - 1} \left( {\lambda \sqrt {t^2  - s^2 } } \right)s^{2p} F_0 (x,s)ds},
\]
\[	
Q_k (x,t) = \left( {\frac{1}{t}\frac{\partial }{{\partial t}}} \right)^p \int\limits_0^t {(t^2  - s^2 )^{\alpha  - 1} \bar J_{\alpha  - 1} \left( {\lambda \sqrt {t^2  - s^2 } } \right)s \times}
\]
\begin{equation}\label{sc41}
\times \left[ {\int\limits_0^s {(s^2  - \tau ^2 )^{k - 1} \tau ^{2p} F_k (x,\tau )d\tau } } \right]ds \end{equation}

	Учитывая вид \eqref{if3.9} функции  $F_0 (x,s),$ функцию $Q_0 (x,t)$  можно написать в виде
\begin{equation}\label{sc42}
Q_0 (x,t) = \frac{1}{{\omega _n }}\left( {\frac{1}{t}\frac{\partial }
{{\partial t}}} \right)^{\frac{{n - 1}}{2}} \int\limits_{\left| {\xi  - x} \right| < t} {\left[ {t^2  - \left| {\xi  - x} \right|^2 } \right]^{\alpha  - 1} \bar J_{\alpha  - 1} \left( {\lambda \sqrt {t^2  - \left| {\xi  - x} \right|^2 } } \right)f_0 (\xi )d\xi }
\end{equation}

 Произведя перестановку порядка интегрирования, из \eqref{sc41} получим
\begin{equation}\label{sc43}
Q_k (x,t) = \left( {\frac{1}{t}\frac{\partial }{{\partial t}}} \right)^p \int\limits_0^t {\tau ^{2p} F_k (x,\tau )K(t,\tau )d\tau },
\end{equation}
где
\[
K(t,\tau ) = \int\limits_\tau ^t {(t^2  - s^2 )^{\alpha  - 1} \bar J_{\alpha  - 1} \left( {\lambda \sqrt {t^2  - s^2 } } \right)s} (s^2  - \tau ^2 )^{k - 1} ds.
\]

Вычислим последний интеграл. Произведя замену переменных интегрирования по формуле $s^2  = \tau ^2  + [t^2  - \tau ^2 ]\mu, $ получим
\[
K(t,\tau ) = (1/2)(t^2  - \tau ^2 )^{\alpha  + k - 1} \int\limits_0^1 {\mu ^{k - 1} (1 - \mu )^{\alpha  - 1} \bar J_{\alpha  - 1} \left( {\lambda \sqrt {(t^2  - \tau ^2 )(1 - \mu )} } \right)d\mu }.
\]

В правой части последнего равенства, пользуясь разложением функции Бесселя - Клиффорда в ряд \eqref{fbk2.3} и  равномерной сходимостью данного ряда при любых значениях аргумента, меняем порядок интегрирования и суммирования. Затем, вычислив полученный интеграл, имеем
\[
K(t,\tau ) = \frac{{\Gamma (k)\Gamma (\alpha )}}
{{2\Gamma (\alpha  + k)}}(t^2  - \tau ^2 )^{\alpha  + k - 1} \bar J_{\alpha  + k - 1} \left( {\lambda \sqrt {t^2  - \tau ^2 } } \right).
\]

Принимая во внимание последнее равенство, из \eqref{sc43} получим
\begin{equation}\label{sc44}
Q_k (x,t) = \frac{{\Gamma (k)\Gamma (\alpha )}}
{{2\Gamma (\alpha  + k)}}\left( {\frac{1}
{t}\frac{\partial }
{{\partial t}}} \right)^p \int\limits_0^t {\tau ^{2p} F_k (x,\tau )(t^2  - \tau ^2 )^{\alpha  + k - 1} \bar J_{\alpha  + k - 1} \left( {\lambda \sqrt {t^2  - \tau ^2 } } \right)d\tau }.
\end{equation}

Подставляя  в \eqref{sc44} выражение \eqref{if3.9} функций $F_k (x,s),$ находим
\[
Q_k (x,t) = \frac{{\Gamma (k)\Gamma (\alpha )}}
{{2 \omega_n \Gamma (\alpha  + k)}}\left( {\frac{1}
{t}\frac{\partial }
{{\partial t}}} \right)^p  \times
\]
\begin{equation}\label{sc45}
 \times \int\limits_{\left| {\xi  - x} \right| < t} {\left[ {t^2  - \left| {\xi  - x} \right|^2 } \right]^{\alpha  + k - 1} \bar J_{\alpha  + k - 1} \left( {\lambda \sqrt {t^2  - \left| {\xi  - x} \right|^2 } } \right)f_k (\xi )d\xi }.
\end{equation}

Подставив \eqref{sc42} и \eqref{sc45} в \eqref{sc4.2} и принимая во внимание $\Gamma (k) = (k - 1)!,$ окончательно находим явную формулу решения задачи \{\eqref{eq1}, \eqref{ic2}\} при нечетном $n:$
\[
u(x,t) = \bar \gamma _n t^{1 - 2\alpha } \left( {\frac{1}
{t}\frac{\partial }
{{\partial t}}} \right)^{\frac{{n - 1}}
{2}} \int\limits_{\left| {\xi  - x} \right| < t} {\frac{\bar J_{\alpha  - 1} \left( {\lambda \sqrt {t^2  - \left| {\xi  - x} \right|^2 } } \right)}{\left[ {t^2  - \left| {\xi  - x} \right|^2 } \right]^{1-\alpha}} f_0 (\xi )d\xi }  +
\]
\begin{equation}\label{sc46}
 + \bar \gamma _n t^{1 - 2a} \sum\limits_{k = 1}^{m - 1} {\frac{{2^{ - 2k} }}
{{k!(\alpha )_k }}} \left( {\frac{1}
{t}\frac{\partial }
{{\partial t}}} \right)^{\frac{{n - 1}}
{2}} \int\limits_{\left| {\xi  - x} \right| < t} { \frac{\bar J_{\alpha  + k - 1} \left( {\lambda \sqrt {t^2  - \left| {\xi  - x} \right|^2 } } \right)}{\left[ {t^2  - \left| {\xi  - x} \right|^2 } \right]^{1-\alpha - k} }  f_k (\xi )d\xi },
\end{equation}
где  $\bar\gamma _n  = [1 \cdot 3 \cdot 5 \cdot ... \cdot (n - 2)\omega _n \Gamma (\alpha )]^{ - 1}. $

	При четном $n,$ применяя метод спуска Адамара \cite[c.682]{Kur1}, \cite[c.74]{Eva1}, из формулы \eqref{sc46} также можно получить явную формулу решения задачи Коши \{\eqref{eq1}, \eqref{ic2}\}, которое имеет вид
\[
u(x,t) = \bar \gamma _n t^{1 - 2\alpha } \sum\limits_{k = 0}^{m - 1} {\frac{{2^{ - 2k} }}
{{k!(\alpha  + 1/2)_k }}}  \times
\]
\begin{equation}\label{sc47}
 \times \left( {\frac{1}
{t}\frac{\partial }
{{\partial t}}} \right)^{\frac{n}
{2}} \int\limits_{\left| {\xi  - x} \right| < t} {\left[ {t^2  - \left| {\xi  - x} \right|^2 } \right]^{\alpha  + k - (1/2)} \bar J_{\alpha  + k - (1/2)} \left( {\lambda \sqrt {t^2  - \left| {\xi  - x} \right|^2 } } \right)f_k (\xi )d\xi }.
\end{equation}

Из теоремы~\ref{t6} следует, что в задаче \{\eqref{eq1}, \eqref{ic2}\} вместо начального условия \eqref{ic2} можно взять начальные условия вида
\begin{equation}\label{sc48}
\left. {[B_{\alpha  - (1/2)}^t ]^k u} \right|_{t = 0}  = \varphi _k^* (x),\,\,x \geqslant 0; \,\,
\left. {\frac{\partial }
{{\partial t}}[B_{\alpha  - (1/2)}^t ]^k u} \right|_{t = 0}  = 0,\,\,x > 0,\,\,\,k = \overline {0,m - 1},
\end{equation}
где  $\varphi _k^* (x) = [(\alpha  + 1/2)_k /(1/2)_k ]\varphi _k (x).$

Кроме того, в силу теоремы~\ref{t2} и \ref{t6}, а также равенства \eqref{irz11} следует, что задачи \{\eqref{eq1}, \eqref{ic2}\} и \{\eqref{eq1}, \eqref{sc48}\} сводятся к одной и той же вспомогательной задаче \{\eqref{peq3.10}, \eqref{peq3.11}\}. Отсюда следует справедливость следующего утверждения:
\begin{lemma}\label{lem2}
 Решение задачи \{\eqref{eq1}, \eqref{ic2}\}  является решением задачи   \{\eqref{eq1}, \eqref{sc48}\}, и наоборот.
\end{lemma}

Данная лемма при $\lambda=0$ в работах \cite{Iva1} и \cite{Ald1} доказана другим методом. Аналогичное утверждение имеет место и для задачи \{\eqref{eq1}, \eqref{ic3}\} . В работах \cite{Iva1}, \cite{Ald1} при $\lambda=0$ доказано, что вместо начальных условий \eqref{ic3} можно взять начальные условия вида
\begin{equation}\label{sc49}
\left. {[B_{\alpha  - (1/2)}^t ]^k u} \right|_{t = 0}  = 0,\,\,x \geqslant 0;
\left. {t^{2\alpha} \frac{\partial }
{{\partial t}}[B_{\alpha  - (1/2)}^t ]^k u} \right|_{t = 0}  = \psi _k^* (x),\,\,x > 0,\,\,\,k = \overline {0,m - 1},
\end{equation}
где  $\psi _k (x) = \prod\limits_{j = 1}^k {(1 - (\alpha /j))} \psi _k^* (x).$

Аналогично можно доказать справедливость следующего утверждения
\begin{lemma}\label{lem3}
При любом $\alpha  < 1/2$ решение задачи \{\eqref{eq1}, \eqref{ic3}\} является решением задачи \{\eqref{eq1}, \eqref{sc49}\}, и наоборот.
\end{lemma}

Теперь рассмотрим задачу нахождения решения $u_2 (x,t)$ уравнения \eqref{eq1}, удовлетворяющего начальным условиям \eqref{sc49}.

\begin{lemma}\label{lem4}
Если $u_1 (x,t;1 - \alpha )$ является решением уравнения $ L^m_{1 - \alpha, \lambda }(u_1 ) = 0,$ удовлетворяющего условиям \eqref{sc48}, в котором $\alpha $ заменяется на $1 - \alpha, $
то функция $u_2 (x,t;\alpha ) = t^{1 - 2\alpha } u_1 (x,t;1 - \alpha )$ при $0 < \alpha  < (1/2)$
будет решением уравнения  $L^m_{\alpha, \lambda }(u_2 ) = 0,$ удовлетворяющего условиям
\[
\left. {[B_{\alpha  - (1/2)}^t ]^k u_2 } \right|_{t = 0}  = 0,\,\,x \geqslant 0; \,\,
\left. {t^{2\alpha } \frac{\partial }{{\partial t}}[B_{\alpha  - (1/2)}^t ]^k u_2 } \right|_{t = 0}  = (1 - 2\alpha )\varphi _k^* (x),\,\,x > 0, \,\, k = \overline {0,m - 1}.
\]
\end{lemma}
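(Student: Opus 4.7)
The plan is to exploit a classical Liouville--Kelvin type identity that interchanges the sign of the Bessel parameter. Set $\gamma_1 = 1/2 - \alpha$ (the parameter appearing in the equation and conditions for $u_1$) and $\gamma_2 = \alpha - 1/2 = -\gamma_1$ (the parameter for $u_2$). A direct application of the product rule to $B^t_{\gamma_2}(t^{1-2\alpha}v)$ yields
\[
B^t_{\gamma_2}\bigl(t^{1-2\alpha}v(t)\bigr) = t^{1-2\alpha}\,B^t_{\gamma_1}v(t),
\]
because with exponent $a = 1-2\alpha$ the coefficient $a(a+2\gamma_2)$ in front of $t^{a-2}v$ vanishes identically and the coefficient in front of $t^{a-1}v_t$ collapses exactly to $2\gamma_1+1 = 2-2\alpha$. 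This single algebraic identity is the whole engine of the lemma.

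Granted this base identity, I would proceed in three steps. First, a straightforward induction on $k$ gives
\[
[B^t_{\gamma_2}]^k (t^{1-2\alpha} u_1) = t^{1-2\alpha}\,[B^t_{\gamma_1}]^k u_1, \qquad k = 0,1,2,\ldots
\]
Since $t^{1-2\alpha}$ commutes with $-\Delta + \lambda^2$ (which acts only in $x$), the same commutation carries over to the full operator; iterating $m$ times one obtains $L^m_{\alpha,\lambda}(u_2) = t^{1-2\alpha}\,L^m_{1-\alpha,\lambda}(u_1) = 0$, which is the equation claimed for $u_2$.

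For the initial conditions, the commutation identity gives $[B^t_{\gamma_2}]^k u_2 = t^{1-2\alpha}\,[B^t_{\gamma_1}]^k u_1$. Since $1-2\alpha > 0$ and the trace $[B^t_{\gamma_1}]^k u_1 |_{t=0} = \varphi^*_k(x)$ is finite, the first condition $[B^t_{\alpha-1/2}]^k u_2 |_{t=0} = 0$ follows at once. Differentiating once in $t$ and multiplying by $t^{2\alpha}$ gives
\[
t^{2\alpha}\frac{\partial}{\partial t}[B^t_{\gamma_2}]^k u_2 = (1-2\alpha)\,[B^t_{\gamma_1}]^k u_1 + t\,\frac{\partial}{\partial t}[B^t_{\gamma_1}]^k u_1,
\]
and as $t \to 0$ the first term tends to $(1-2\alpha)\varphi^*_k(x)$ while the second vanishes by the hypothesis $\partial_t [B^t_{\gamma_1}]^k u_1 |_{t=0} = 0$.

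The only genuinely non-routine point is the base algebraic identity; everything else is bookkeeping and a careful passage to the limit. The restriction $0 < \alpha < 1/2$ is exactly what is needed: it makes $t^{1-2\alpha} \to 0$ at $t=0$ (so the first trace of $u_2$ vanishes) while keeping $\gamma_1 = 1/2 - \alpha > 0$, so that the hypotheses on $u_1$ remain within the admissible range $\gamma > -1/2$ imposed on the problem throughout the paper.
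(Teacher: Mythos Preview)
Your proposal is correct and follows essentially the same route as the paper's proof: both establish the base intertwining identity $B^t_{\alpha-1/2}(t^{1-2\alpha}v)=t^{1-2\alpha}B^t_{1/2-\alpha}v$ by direct computation, extend it by induction to the iterated operators (the paper cites the $m=1$ case of the $L$-identity from \cite{SKM}, whereas you deduce it from the Bessel identity plus the trivial commutation of $t^{1-2\alpha}$ with $-\Delta+\lambda^2$, which is equivalent), and then read off the initial data for $u_2$ from the product-rule computation $t^{2\alpha}\partial_t\bigl(t^{1-2\alpha}[B^t_{1/2-\alpha}]^k u_1\bigr)=(1-2\alpha)[B^t_{1/2-\alpha}]^k u_1+t\,\partial_t[B^t_{1/2-\alpha}]^k u_1$ evaluated at $t=0$. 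Your closing remark on the role of $0<\alpha<1/2$ is a nice clarification not made explicit in the paper.
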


\begin{proof}
Методом математической индукции по $m,$ докажем справедливость следующих равенств
\begin{equation}\label{pl51}
L_{\alpha ,\lambda }^m (u_2 ) = L_{\alpha ,\lambda }^m (t^{1 - 2\alpha } u_1 ) = t^{1 - 2\alpha } L_{1 - \alpha, \lambda }^m (u_1 ),
\end{equation}
\begin{equation}\label{pl52}
[B_{\alpha  - (1/2)}^t ]^m u_2  = [B_{\alpha  - (1/2)}^t ]^m t^{1 - 2\alpha } u_1  = t^{1 - 2\alpha } [B_{(1/2) - \alpha }^t ]^m u_1.
\end{equation}

	При $m = 1$  справедливость формулы~\eqref{pl51} следует из известной формулы соответствий \cite[c.577]{SKM}: $L_{\alpha ,\lambda }^m (t^{1 - 2\alpha } u_1 ) = t^{1 - 2\alpha } L_{1 - \alpha ,\lambda }^m (u_1 ),$ а формула~\eqref{pl52} проверяется непосредственным вычислением
\[
[B_{\alpha  - (1/2)}^t ]t^{1 - 2\alpha } u_1  = t^{ - 2\alpha } \frac{\partial }
{{\partial t}}t^{2\alpha } \frac{\partial }
{{\partial t}}t^{1 - 2\alpha } u_1  = t^{ - 2\alpha } \frac{\partial }
{{\partial t}}\left[ {t\frac{{\partial u_1 }}
{{\partial t}} + (1 - 2\alpha )u_1 } \right] =
\]
\[
 = t^{1 - 2\alpha } \left[ {\frac{{\partial ^2 u_1 }}
{{\partial t^2 }} + \frac{{2(1 - \alpha )}}
{t}\frac{{\partial u_1 }}
{{\partial t}}} \right] = t^{1 - 2\alpha } [B_{(1/2) - \alpha }^t ]u_1.
\]

	Предположим, что формулы \eqref{pl51} и \eqref{pl52} верны при $m = p.$ Докажем их справедливость при $m = p + 1.$

	Из равенства $L_{\alpha ,\lambda }^{p + 1} (t^{1 - 2\alpha } u_1 ) = L_{\alpha ,\lambda }^{} [L_{\alpha ,\lambda }^p (t^{1 - 2\alpha } u_1 )]$  в силу предположения индукции, имеем $L_{\alpha ,\lambda }^{p + 1} (t^{1 - 2\alpha } u_1 ) = L_{\alpha ,\lambda }^{} [t^{1 - 2\alpha } L_{1 - \alpha ,\lambda }^p (u_1 )].$
Далее, применяя формулы соответствий, получим  $L_{\alpha ,\lambda }^{p + 1} (t^{1 - 2\alpha } u_1 ) = t^{1 - 2\alpha } L_{1 - \alpha ,\lambda }^{p + 1} (u_1 ).$
Из последнего равенства следует справедливость формулы \eqref{pl51}. Аналогично доказывается равенство \eqref{pl52}.

В равенстве \eqref{pl52} вычислив производную по $t,$ имеем
\[
\frac{\partial }{{\partial t}}[B_{\alpha  - (1/2)}^t ]^k u_2  = \frac{\partial }
{{\partial t}}\left\{ {t^{1 - 2\alpha } [B_{(1/2) - \alpha }^t ]^k u_1 } \right\} =
\]
\[
= (1 - 2\alpha )t^{ - 2\alpha } [B_{(1/2) - \alpha }^t ]^k u_1  + t^{1 - 2\alpha } \frac{\partial }
{{\partial t}}[B_{(1/2) - \alpha }^t ]^k u_1.
\]

	Отсюда, в силу условий \eqref{sc49} в котором $\alpha $  заменено на $1 - \alpha, $ следует, что
\[
\left. {t^{2\alpha } \frac{\partial }
{{\partial t}}[B_{\alpha  - (1/2)}^t ]^k u_2 } \right|_{t = 0}  = \left[ {(1 - 2\alpha )[B_{(1/2) - \alpha }^t ]^k u_1  + t\frac{\partial }{{\partial t}}[B_{(1/2) - \alpha }^t ]^k u_1 } \right]_{t = 0}  = (1 - 2\alpha )\varphi _k^* (x).
\]
Этим завершается доказательство леммы 4.

\end{proof}

Для решения задачи \{\eqref{eq1}, \eqref{sc49}\}, применяя лемму 4, в силу \eqref{sc46} при нечетном $n,$ имеем
\[
u_2 (x,t) = \bar \gamma _n \left( {\frac{1}{t}\frac{\partial }
{{\partial t}}} \right)^{\frac{{n - 1}}{2}} \int\limits_{\left| {\xi  - x} \right| < t} {\left[ {t^2  - \left| {\xi  - x} \right|^2 } \right]^{ - \alpha } \bar J_{ - \alpha } \left( {\lambda \sqrt {t^2  - \left| {\xi  - x} \right|^2 } } \right)g_0^* (\xi )d\xi }  +
\]
\[
 + \bar \gamma _n \sum\limits_{k = 1}^{m - 1} {\frac{{2^{ - 2k} }}
{{k!(1 - \alpha )_k }}} \left( {\frac{1}
{t}\frac{\partial }
{{\partial t}}} \right)^{\frac{{n - 1}}
{2}} \int\limits_{\left| {\xi  - x} \right| < t} {\frac{{\bar J_{ - \alpha  + k} \left( {\lambda \sqrt {t^2  - \left| {\xi  - x} \right|^2 } } \right)}}
{{\left[ {t^2  - \left| {\xi  - x} \right|^2 } \right]^{\alpha  - k} }}g_k^* (\xi )d\xi }
\]
где  $g_k^* (s) = \sum\limits_{j = 0}^k {( - 1)^j C_k^j [B_{\alpha  - 1/2}^x ]^j \Psi _{k - j}^* (x)},$
$\Psi _k^* (x) = \sum\limits_{j = 0}^k {a_j C_k^j \lambda ^{2(k - j)} \psi _j^* (x)}, $
$k = \overline {0,m - 1}. $

Аналогично, в случае четного $n,$ из \eqref{sc47} имеем
\[
u_2 (x,t) = \bar \gamma _n \sum\limits_{k = 0}^{m - 1} {\frac{{2^{ - 2k} }}
{{k!((3/2) - \alpha )_k }}}  \times
\]
\[
 \times \left( {\frac{1}{t}\frac{\partial }{{\partial t}}} \right)^{\frac{n}
{2}} \int\limits_{\left| {\xi  - x} \right| < t} {\left[ {t^2  - \left| {\xi  - x} \right|^2 } \right]^{(3/2) - \alpha  + k} \bar J_{(3/2) - \alpha  + k} \left( {\lambda \sqrt {t^2  - \left| {\xi  - x} \right|^2 } } \right)g_k^* (\xi )d\xi }.
\]

В заключении заметим, что данный метод решения поставленной задачи можно применить и в том случае, когда в уравнении \eqref{eq1} оператор Бесселя, действует по нескольким пространственным переменным.

\newpage

\begin{center}
{\bf A.K.Urinov, Sh.T.Karimov}\\
\textbf{Solution of the analogue of the Cauchy problem for the iterated multidimensional Klein-Gordon-Fock equation with the Bessel operator}\\[2mm]

\end{center}

{\bf Abstract.} An analogue of the Cauchy problem for the iterated multidimensional Klein-Gordon-Fock equation with a time-dependent Bessel operator is investigated. Applying the generalized Erd\'{e}lyi-Kober operator of fractional order, the problem posed is reduced to the Cauchy problem for the poly-wave equation. An explicit formula for solving this problem is constructed by the spherical mean method. On the basis of the solution obtained, an integral representation of the solution of the problem is found.

{\it\bf Key Words}: Cauchy problem, Klein-Gordon-Fock equation, generalized Erd\'{e}lyi-Kober operator, Bessel operator.


\begin{thebibliography}{99}

\bibitem{BLOT} {Н.\,Н.~Боголюбов, А.\,А.~Логунов, А.\,И.~Оксак, И.\,Т.~Тодоров.} \textit{Общие принципы квантовой теории поля.} {М.:Наука, 1987.}

\bibitem{Iva1} {Л.\,А.~Иванов.}  \textit{Задача Коши для некоторых операторов с особенностями} // {Дифференциальные уравнения.   1982.  Т. 18.  No 6.   С. 1020--1028.}

\bibitem{Ald1}{С.\,А.~Алдашев.}  \textit{О задаче Коши для операторов, распадающихся на множители с особенностями.} //{ Дифференциальные уравнения. 1981.  Т. 17.  No 2.   С. 247--255.}

\bibitem{Wei1} {A.~Weinstein.}  \textit{On the wave equation and the equation of Euler - Poisson} //
{Proc. Fifth Symp. Appl. Math., AMS, 1954 p. 137--147}

\bibitem{Kur1}{Р.~Курант.}  \textit{Уравнения с частными производными.} { М:. Мир.  1964. 830~с.}

\bibitem{UK} {A.\,K.~Urinov, Sh.\,T.~Karimov.}  \textit{Solution of the Cauchy Problem for Generalized Euler-Poisson-Darboux Equation by the Method of Fractional Integrals}// {В сборнике: Springer Proceedings in Mathematics and Statistics. Сер. "Progress in Partial Differential Equations: Asymptotic Profiles, Regularity an Well-Posedness"\,  2013. С.~321--337.}

\bibitem{Erd1} {A.~Erdelyi.}  \textit{On fractional integration and its application to the theory of Hankel transforms} // {Quart. J. Math. Oxford ser. 1940. Vol. 11, N 44. P.~293--303.}

\bibitem{EK} {A.~Erdelyi, H.~Kober.}  \textit{Some remarks on Hankel transforms} //{ Quart. J. Math. Oxford ser. 1940. Vol. II, N 43. P.~212--221.}

\bibitem{SKM} {С.\,Г.~Самко, А.\,А.~Килбас, О.\,И.~Маричев.}  \textit{Интегралы и производные  дробного порядка и их приложения.}{ Минск: Наука и техника, 1987. 702~c.}

\bibitem{Erd2} {A.~Erdelyi.}  \textit{Some applications of fractional integration} // {Boeing Sci. Res. Labor. Docum. Math. Note N 316, Dl-82-0286, 1963. 23~p.}

\bibitem{Erd3} {A.~Erdelyi.}  \textit{An application of fractional integrals} // {J. Analyse Math. 1965. Vol 14. P.~113--126.}

\bibitem{Snedd1} {I.\,N.~Sneddon.}  \textit{Mixed Boundary Value Problems in Potential Theory.} {North-Holland Publ., Amsterdam (1966).}

\bibitem{Snedd2} {I.\,N.~Sneddon.}  \textit{The use in mathematical analysis of Erd\'{e}lyi-Kober' operators and of some of their applications.} {In: Fractional Calculus and Its Applications, Proc., Publ. as: Lecture Notes in Mathematics 457, Springer-Verlag, New York (1975), P.~37--79.}

\bibitem{Kir} {V.~Kiryakova.}  \textit{Generalized Fractional Calculus and Applications.} {Long- man Sci. \& Technical and J. Wiley \& Sons, Harlow and N. York (1994).}

\bibitem{Low1} {J.\,S.~Lowndes.}  \textit{A generalization of the Erd\'{e}lyi-Kober operators} // {Proc. Edinb. Math. Soc. 1970. 17, 2. P.~139--148}

\bibitem{Hey1} {P.~Heywood.}  \textit{Improved boundedness conditions for Lowndes' operators} // {Proc. Roy. Soc. Edinburgh, A, 1975. 73,9. P.~291--199}

\bibitem{Hey2} {P.~Heywood and P.\,G.~Rooney.}  \textit{On the boundedness of Lowndes' operators} // {J. London Math. Soc. 1975, 10, 2. P.~241--248}

\bibitem{Low2} {J.\,S.~Lowndes.}  \textit{An application of some fractional integrals} // {Proc. Edinb. Math. Soc. 1979, 20, 1. P.~25--41}

\bibitem{Kar1} {Ш.\,Т.~Каримов.}  \textit{Новые свойства обобщенного оператора Эрдейи-Кобера и их приложения} // {Докл. АН РУз. - 2014. № 5. C.~11--13.}

\bibitem{Kar2} {Ш.\,Т.~Каримов.}  \textit{О некоторых обобщениях свойств оператора Эрдейи-Кобера и их приложения} // {Вестник КРАУНЦ. Физ.-мат. науки. 2017. № 2(18). C.~20--40. DOI: 10.18454/2079-6641-2017-18-2-20-40.}

\bibitem{Wei2} {A.~Weinstein.}  \textit{On a class of partial differential equation of even order} // {Ann. Math. Pura. Appl. 1995. 39. P.~245--254}

\bibitem{Kra1} {D.~Krahn.}  \textit{On the iterated wave equation} // {Proc. Ser. A, Math. Sciences, Amsterdam 1957. 60, 1. P.~492--505}

\bibitem{Ald2}{С.\,А.~Алдашев.}  \textit{Задача Коши для одного уравнения $2n$-го порядка} // {Дифференциальные уравнения 1979, 15. 1. P.~2085--2087}

\bibitem{GK}{С.\,А.~Гальперин, В.\,Е.~Кондрашов.}  \textit{Задача Коши для дифференциальных  операторов, распадающихся на волновые множители} // {Труды Московского мат. общества, 1967. 16. P.~109--136}

\bibitem{Eva1} { L.\,C.~Evans.}  \textit{Partial Differential Equation.} {AMS, 1997. Berkeley.}

\bibitem{Kar3} {Ш.\,Т.~Каримов.}  \textit{Об одном методе решения задачи Коши для одномерного поливолнового уравнения с сингулярным оператором Бесселя} // {Изв. вузов. Матем. 2017. № 8. С.~27--41.}

\bibitem{IP1} {В.\,А.~Ильин, Э.\,Г.~Позняк.}  \textit{Основы математического анализа: В 2-х ч. Часть I: Учеб.: Для вузов.  - 7-е изд.} {2005.M:, ФИЗМАТЛИТ}
\end{thebibliography}
\end{document}